\documentclass[10pt]{amsart}
\usepackage{epsfig,amsfonts,amsthm,amssymb,latexsym,amsmath}

\textwidth 12.2cm
\textheight 19.5cm
\oddsidemargin=0.7 true in
\evensidemargin=0.7 true in

\newcommand{\mymod}[3]{#1 \equiv #2 \kern -0.5em \pmod{#3}}
\newcommand{\mynotmod}[3]{#1 \not \equiv #2 \kern -0.6em \pmod{#3}}
\usepackage{framed}

\theoremstyle{plain}
\newtheorem{theorem}{Theorem}[section]
\newtheorem{corollary}[theorem]{Corollary}

\newtheorem{proposition}[theorem]{Proposition}

\theoremstyle{remark}

\theoremstyle{definition}
\newtheorem{definition}[theorem]{Definition}

\title[Third-order $k$-Jacobsthal matrix sequence]{Third-order $k$-Jacobsthal matrix sequence: Another way of demonstrating their properties}

\vspace{5pt}

\author{\scriptsize GAMALIEL CERDA-MORALES}
\date{}

\begin{document}
\maketitle

\vspace{-20pt}
\begin{center}
{\footnotesize Instituto de Matem\'aticas, Pontificia Universidad Cat\'olica de Valpara\'iso, \\
Blanco Viel 596, Cerro Bar\'on, Valpara\'iso, Chile. \\
E-mail: gamaliel.cerda.m@mail.pucv.cl
}\end{center}

\vspace{20pt}

\hrule

\begin{abstract}
Recently, Cerda-Morales \cite{Ce6} introduced commutative matrices derived from the third-order Jacobsthal matrix sequence and the third-order Jacobsthal--Lucas matrix sequence. In the present work, through the identification of certain special matrices, we can identify other forms of demonstration and also the description of commutative matrix properties for negative indices. A new generalization of this sequence is used for our purpose.
\end{abstract}

\medskip
\noindent
\subjclass{\footnotesize {\bf Mathematical subject classification:} 
11B37, 11B39.}

\medskip
\noindent
\keywords{\footnotesize {\bf Key words:} Third-order linear recurrence, third-order Jacobsthal number, third-order Jacobsthal matrix sequence.}
\medskip

\hrule

\section{Introduction}\label{sec:1}

The third-order Jacobsthal numbers was introduced by Cook and Bacon in his article that was published in 2013 \cite{Co}. Third-order Jacobsthal--Lucas, third-order Jacobsthal and modified third-order Jacobsthal numbers have recently been studied by Cerda-Morales \cite{Ce1,Ce4,Ce5}.

The third-order Jacobsthal and modified third-order Jacobsthal sequences $\left(J_{n}^{(3)}\right)_{n\geq 0}$ and $\left(K_{n}^{(3)}\right)_{n\geq 0}$, are defined for $n\geq 0$ by
$$J_{n+3}^{(3)}=J_{n+2}^{(3)}+J_{n+1}^{(3)}+2J_{n}^{(3)}\ \textrm{and}\ K_{n+3}^{(3)}=K_{n+2}^{(3)}+K_{n+1}^{(3)}+2K_{n}^{(3)},$$
in which $J_{0}^{(3)}=0$, $J_{1}^{(3)}=J_{2}^{(3)}=1$ and $K_{0}^{(3)}=3K_{1}^{(3)}=K_{2}^{(3)}=3$, respectively.

The Binet formulae are
\begin{equation}\label{b1}
J_{n}^{(3)}=\frac{1}{7}2^{n+1}-\frac{3+2i\sqrt{3}}{21}\omega_{1}^{n}-\frac{3-2i\sqrt{3}}{21}\omega_{2}^{n}=\frac{1}{7}\left(2^{n+1}-Z_{n}\right)
\end{equation}
and
\begin{equation}\label{b2}
K_{n}^{(3)}=2^{n}+\omega_{1}^{n}+\omega_{2}^{n}=2^{n}+Y_{n},
\end{equation}
where $$Z_{n}=\left\{ 
\begin{array}{ccc}
2 & \textrm{if} & \mymod{n}{0}{3} \\ 
-3 & \textrm{if} & \mymod{n}{1}{3} \\ 
1 & \textrm{if} & \mymod{n}{2}{3}
\end{array}
\right. \ \textrm{and}\ Y_{n}=\left\{ 
\begin{array}{ccc}
2 & \textrm{if} & \mymod{n}{0}{3} \\ 
-1 & \textrm{if} & \mynotmod{n}{0}{3}
\end{array}
\right. $$

In \cite{Ce3}, Cerda-Morales derived the following recurrence relations for the sequences $\left(J_{n}^{(3)}\right)$ and $\left(K_{n}^{(3)}\right)$. For $r\geq 0$, $n\geq 0$, 
\begin{equation}\label{m1}
J_{r(n+3)}^{(3)}=K_{r}^{(3)}J_{r(n+2)}^{(3)}-\left(2^{r}Y_{r}+1\right)J_{r(n+1)}^{(3)}+2^{r}J_{rn}^{(3)},
\end{equation}
where the initial conditions of the sequences $\left(J_{rn}^{(3)}\right)$ are $0$, $J_{n}^{(3)}$ and $J_{2r}^{(3)}$.

The Binet formulae are
\begin{equation}\label{m2}
J_{rn}^{(3)}=\frac{1}{7}\left(2^{rn+1}-Z_{rn}\right)\ \textrm{and}\ K_{rn}^{(3)}=2^{rn}+Y_{rn},
\end{equation}
respectively.

Let us consider the following definitions introduced recently in the work \cite{Ce6}. We can find several properties resulting from generalized third-order Jacobsthal sequences and the third-order Jacobsthal--Lucas sequence. On the other hand, from certain properties of matrices that generate the elements of the third-order $k$-Jacobsthal sequences and the third-order $k$-Jacobsthal--Lucas sequence we can find other more immediate and new forms of the corresponding theorems that we find in the work \cite{Ce6}. Other interesting properties about the generalized Fibonacci matrix sequences can be found at works \cite{Ci1,Ci2,Ip} and $k$-Pell matrix sequences in \cite{Re,Wa}.

\begin{definition}\label{d1}
For $k\in \mathbb{R}$ with $k>0$, the third-order $k$-Jacobsthal sequence $\left(J_{n}^{(3)}(k)\right)$ is defined by $$J_{n+3}^{(3)}(k)=(k-1)J_{n+2}^{(3)}(k)+(k-1)J_{n+1}^{(3)}(k)+kJ_{n}^{(3)}(k),$$ where $J_{0}^{(3)}(k)=0$, $J_{1}^{(3)}(k)=1$ and $J_{2}^{(3)}(k)=k-1$.
\end{definition}

\begin{definition}\label{d2}
For $k\in \mathbb{R}$, $k>0$, the third-order $k$-Jacobsthal--Lucas sequence $\left(j_{n}^{(3)}(k)\right)$ is defined by $$j_{n+3}^{(3)}(k)=(k-1)j_{n+2}^{(3)}(k)+(k-1)j_{n+1}^{(3)}(k)+kj_{n}^{(3)}(k),$$ where $j_{0}^{(3)}(k)=2$, $j_{1}^{(3)}(k)=k-1$ and $j_{2}^{(3)}(k)=k^{2}+1$.
\end{definition}

Especially, when $k=2$, then $J_{n}^{(3)}(2)=J_{n}^{(3)}$ (the $n$-th third-order Jacobsthal number) and $j_{n}^{(3)}(2)=j_{n}^{(3)}$ (the $n$-th third-order Jacobsthal--Lucas number). Next, let us look at two mathematical definitions recently introduced in \cite{Ce6} related to the matrix sequence.

\begin{definition}\label{m1}
For $k\in \mathbb{R}$, $k>0$, the third-order $k$-Jacobsthal matrix sequence $\left(\textrm{M}_{k,n}^{(3)}\right)$ is defined by $$\textrm{M}_{k,n+3}^{(3)}=(k-1)\textrm{M}_{k,n+2}^{(3)}+(k-1)\textrm{M}_{k,n+1}^{(3)}+k\textrm{M}_{k,n}^{(3)},$$ where $\textrm{M}_{k,0}^{(3)}=\left[
\begin{array}{ccc}
1& 0& 0 \\ 
0&1& 0\\ 
0 & 0&1
\end{array}
\right]$, $\textrm{M}_{k,1}^{(3)}=\left[
\begin{array}{ccc}
k-1& k-1& k \\ 
1&0& 0\\ 
0 & 1&0
\end{array}
\right]$ and \\ $\textrm{M}_{k,2}^{(3)}=\left[
\begin{array}{ccc}
k^{2}-k&k^{2}-k+1&k^{2}-k\\
k-1& k-1& k \\ 
1&0& 0
\end{array}
\right]$.
\end{definition}

\begin{definition}\label{m2}
For $k\in \mathbb{R}$, $k>0$, the third-order $k$-Jacobsthal--Lucas matrix sequence $\left(\textrm{N}_{k,n}^{(3)}\right)$ is defined by $$\textrm{N}_{k,n+3}^{(3)}=(k-1)\textrm{N}_{k,n+2}^{(3)}+(k-1)\textrm{N}_{k,n+1}^{(3)}+k\textrm{N}_{k,n}^{(3)},$$ $\textrm{N}_{k,0}^{(3)}=\left[
\begin{array}{ccc}
k-1& 2k& 2k \\ 
2&1-k& 2\\ 
\frac{2}{k} & \frac{2}{k} &-\frac{1}{k}(k^{2}+k-2)
\end{array}
\right]$, $\textrm{N}_{k,1}^{(3)}=\left[
\begin{array}{ccc}
k^{2}+1& k^{2}+1& k^{2}-k \\ 
k-1& 2k& 2k \\ 
2&1-k& 2
\end{array}
\right]$ and \\ $\textrm{N}_{k,2}^{(3)}=\left[
\begin{array}{ccc}
k^{3}+k&k^{3}-1&k^{3}+k\\
k^{2}+1& k^{2}+1& k^{2}-k \\ 
k-1& 2k& 2k \\ 
\end{array}
\right]$.
\end{definition}

Before discussing other ways of demonstrating the results addressed in the work \cite{Ce6}, we will consider the following matrix: $$\textrm{M}_{k,1}^{(3)}=\left[
\begin{array}{ccc}
k-1& k-1& k \\ 
1&0& 0\\ 
0 & 1&0
\end{array}
\right].$$ In this way, we will state the following proposition.

\begin{proposition}\label{prop1}
For any integer $n\geq 1$, we obtain 
$$
\left(\emph{\textrm{M}}_{k,1}^{(3)}\right)^{n}=\left[
\begin{array}{ccc}
J_{n+1}^{(3)}(k) & T_{n-1}^{(3)}(k) & kJ_{n}^{(3)}(k) \\ 
J_{n}^{(3)}(k) & T_{n-2}^{(3)}(k) & kJ_{n-1}^{(3)}(k)\\ 
J_{n-1}^{(3)}(k) & T_{n-3}^{(3)}(k) & kJ_{n-2}^{(3)}(k)
\end{array}
\right],
$$
where $T_{n}^{(3)}(k) =(k-1)J_{n+1}^{(3)}(k)+kJ_{n}^{(3)}(k)$.
\end{proposition}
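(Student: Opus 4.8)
For brevity write $A:=\textrm{M}_{k,1}^{(3)}$. The plan is to argue by induction on $n$, with $n=1$ as the base case and the identity $A^{n+1}=A^{n}A$ driving the inductive step. A preliminary observation is needed first: the middle column of the asserted matrix for small $n$ involves $J_{m}^{(3)}(k)$ with $m<0$, so I would extend the sequence of Definition~\ref{d1} to all integers by running its recurrence backwards, which yields $J_{-1}^{(3)}(k)=0$ and $J_{-2}^{(3)}(k)=1/k$ (well defined since $k>0$, and in $A$ the latter occurs only multiplied by $k$). With this convention the base case $n=1$ is a direct substitution: using $J_{0}^{(3)}(k)=0$, $J_{1}^{(3)}(k)=1$, $J_{2}^{(3)}(k)=k-1$, $J_{-1}^{(3)}(k)=0$, $J_{-2}^{(3)}(k)=1/k$ together with $T_{m}^{(3)}(k)=(k-1)J_{m+1}^{(3)}(k)+kJ_{m}^{(3)}(k)$ (so that $T_{0}^{(3)}(k)=k-1$, $T_{-1}^{(3)}(k)=0$, $T_{-2}^{(3)}(k)=1$) reproduces exactly the entries of $A$.

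For the inductive step I would assume the displayed formula for a fixed $n\geq 1$ and form $A^{n}A$, using that the columns of $A$ are $(k-1,1,0)^{\top}$, $(k-1,0,1)^{\top}$, $(k,0,0)^{\top}$. The third column of $A^{n}A$ is $k$ times the first column of $A^{n}$, i.e.\ $\bigl(kJ_{n+1}^{(3)}(k),kJ_{n}^{(3)}(k),kJ_{n-1}^{(3)}(k)\bigr)^{\top}$, which is the third column demanded at index $n+1$. The $i$-th entry of the second column of $A^{n}A$ is $(k-1)[A^{n}]_{i1}+[A^{n}]_{i3}=(k-1)J_{n+2-i}^{(3)}(k)+kJ_{n+1-i}^{(3)}(k)=T_{n+1-i}^{(3)}(k)$, straight from the definition of $T$, again matching index $n+1$. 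The $i$-th entry of the first column of $A^{n}A$ is $(k-1)[A^{n}]_{i1}+[A^{n}]_{i2}$; rewriting the $T$-entry via its definition turns this into $(k-1)J_{n+2-i}^{(3)}(k)+(k-1)J_{n+1-i}^{(3)}(k)+kJ_{n-i}^{(3)}(k)=J_{n+3-i}^{(3)}(k)$ by the recurrence of Definition~\ref{d1} (at index $n-i$, legitimate for all integers after the backward extension). Assembling the three columns yields precisely the claimed matrix for $A^{n+1}$, which closes the induction.

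The only point requiring care is the index bookkeeping: keeping the shifts in the $T$-column aligned through the multiplication, and checking that the negative-index values used in the base case are exactly those the backward recurrence produces (in particular that the apparent $1/k$ cancels). Everything else is a $3\times3$ matrix multiplication plus the defining recurrence, so I do not anticipate a real obstacle. It may be worth recording the conceptual reason this works: $A$ is the companion matrix of $x^{3}-(k-1)x^{2}-(k-1)x-k$, so each column of $A^{n}$, viewed as a function of $n$, is a solution of the governing third-order recurrence, and the three columns are pinned down by the initial data read off from $A^{0}=I$ and $A^{1}=A$; the induction above is just the explicit form of this remark.
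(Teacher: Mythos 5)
Your proof is correct and follows essentially the same route as the paper: induction on $n$ with base case $n=1$ and the step $A^{n+1}=A^{n}A$, matching columns via the definition of $T_{n}^{(3)}(k)$ and the recurrence of Definition~\ref{d1}. The only difference is that you make explicit the backward extension $J_{-1}^{(3)}(k)=0$, $J_{-2}^{(3)}(k)=1/k$ that the paper uses tacitly in its base case, which is a welcome clarification rather than a new idea.
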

\begin{proof}
The result holds for $n=1$: $$\left(\textrm{M}_{k,1}^{(3)}\right)^{1}=\left[
\begin{array}{ccc}
k-1& k-1& k \\ 
1&0& 0\\ 
0 & 1&0
\end{array}
\right]=\left[
\begin{array}{ccc}
J_{2}^{(3)}(k) & (k-1)J_{1}^{(3)}(k)+kJ_{0}^{(3)}(k) & kJ_{1}^{(3)}(k) \\ 
J_{1}^{(3)}(k) & (k-1)J_{0}^{(3)}(k)+kJ_{-1}^{(3)}(k)& kJ_{0}^{(3)}(k)\\ 
J_{0}^{(3)}(k) & (k-1)J_{-1}^{(3)}(k)+kJ_{-2}^{(3)}(k) & kJ_{-1}^{(3)}(k)
\end{array}
\right].$$ By mathematical induction, we assume that $$\left(\textrm{M}_{k,1}^{(3)}\right)^{n}=\left[
\begin{array}{ccc}
J_{n+1}^{(3)}(k) & T_{n-1}^{(3)}(k) & kJ_{n}^{(3)}(k) \\ 
J_{n}^{(3)}(k) & T_{n-2}^{(3)}(k) & kJ_{n-1}^{(3)}(k)\\ 
J_{n-1}^{(3)}(k) & T_{n-3}^{(3)}(k) & kJ_{n-2}^{(3)}(k)
\end{array}
\right],$$ with $T_{n}^{(3)}(k) =(k-1)J_{n+1}^{(3)}(k)+kJ_{n}^{(3)}(k)$. Next, consider the following matrix power:
\begin{align*}
\left(\textrm{M}_{k,1}^{(3)}\right)^{n+1}&=\left[
\begin{array}{ccc}
k-1& k-1& k \\ 
1&0& 0\\ 
0 & 1&0
\end{array}
\right]^{n+1}\\
&=\left[
\begin{array}{ccc}
k-1& k-1& k \\ 
1&0& 0\\ 
0 & 1&0
\end{array}
\right]^{n}\left[
\begin{array}{ccc}
k-1& k-1& k \\ 
1&0& 0\\ 
0 & 1&0
\end{array}
\right]\\
&=\left[
\begin{array}{ccc}
J_{n+1}^{(3)}(k) & T_{n-1}^{(3)}(k) & kJ_{n}^{(3)}(k) \\ 
J_{n}^{(3)}(k) & T_{n-2}^{(3)}(k) & kJ_{n-1}^{(3)}(k)\\ 
J_{n-1}^{(3)}(k) & T_{n-3}^{(3)}(k) & kJ_{n-2}^{(3)}(k)
\end{array}
\right]\left[
\begin{array}{ccc}
k-1& k-1& k \\ 
1&0& 0\\ 
0 & 1&0
\end{array}
\right]\\
&=\left[
\begin{array}{ccc}
(k-1)J_{n+1}^{(3)}(k)+T_{n-1}^{(3)}(k)  & (k-1)J_{n+1}^{(3)}(k)+kJ_{n}^{(3)}(k)  & kJ_{n+1}^{(3)}(k) \\ 
(k-1)J_{n}^{(3)}(k)+T_{n-2}^{(3)}(k)  & (k-1)J_{n}^{(3)}(k)+kJ_{n-1}^{(3)}(k)  & kJ_{n}^{(3)}(k) \\ 
(k-1)J_{n-1}^{(3)}(k)+T_{n-3}^{(3)}(k)  & (k-1)J_{n-1}^{(3)}(k)+kJ_{n-2}^{(3)}(k)  & kJ_{n-1}^{(3)}(k) 
\end{array}
\right]\\
&=\left[
\begin{array}{ccc}
J_{n+2}^{(3)}(k) & T_{n}^{(3)}(k) & kJ_{n+1}^{(3)}(k) \\ 
J_{n+1}^{(3)}(k) & T_{n-1}^{(3)}(k) & kJ_{n}^{(3)}(k)\\ 
J_{n}^{(3)}(k) & T_{n-2}^{(3)}(k) & kJ_{n-1}^{(3)}(k)
\end{array}
\right],
\end{align*}
where $T_{n}^{(3)}(k) =(k-1)J_{n+1}^{(3)}(k)+kJ_{n}^{(3)}(k)$.
\end{proof}

Next, let us consider the following matrix $$\textrm{N}_{k,0}^{(3)}=\left[
\begin{array}{ccc}
k-1& 2k& 2k \\ 
2&1-k& 2\\ 
\frac{2}{k} & \frac{2}{k} &-\frac{1}{k}(k^{2}+k-2)
\end{array}
\right].$$ 
In addition, we will take the following matrix products indicated in the expression $\textrm{N}_{k,0}^{(3)}\left(\textrm{M}_{k,1}^{(3)}\right)^{n}$. Then, we consider the following proposition.

\begin{proposition}\label{prop2}
For any integer $n\geq 1$,we obtain $$\emph{\textrm{N}}_{k,0}^{(3)}\left(\emph{\textrm{M}}_{k,1}^{(3)}\right)^{n}=\left[
\begin{array}{ccc}
j_{n+1}^{(3)}(k) & t_{n-1}^{(3)}(k) & kj_{n}^{(3)}(k) \\ 
j_{n}^{(3)}(k) & t_{n-2}^{(3)}(k) & kj_{n-1}^{(3)}(k)\\ 
j_{n-1}^{(3)}(k) & t_{n-3}^{(3)}(k) & kj_{n-2}^{(3)}(k)
\end{array}
\right],$$
where $t_{n}^{(3)}(k) =(k-1)j_{n+1}^{(3)}(k)+kj_{n}^{(3)}(k)$.
\end{proposition}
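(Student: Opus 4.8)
The plan is to deduce this from Proposition~\ref{prop1} and then imitate its inductive argument almost word for word. Write $P_{n}:=\textrm{N}_{k,0}^{(3)}\bigl(\textrm{M}_{k,1}^{(3)}\bigr)^{n}$ and let $Q_{n}$ denote the matrix on the right-hand side of the asserted identity, where $t_{n}^{(3)}(k)=(k-1)j_{n+1}^{(3)}(k)+kj_{n}^{(3)}(k)$; in particular $t_{n-1}^{(3)}(k)=(k-1)j_{n}^{(3)}(k)+kj_{n-1}^{(3)}(k)$. Since $\bigl(\textrm{M}_{k,1}^{(3)}\bigr)^{n+1}=\bigl(\textrm{M}_{k,1}^{(3)}\bigr)^{n}\,\textrm{M}_{k,1}^{(3)}$ we have $P_{n+1}=P_{n}\,\textrm{M}_{k,1}^{(3)}$, so if $P_{n}=Q_{n}$ then the first row of $P_{n+1}$ is $\bigl((k-1)j_{n+1}^{(3)}(k)+t_{n-1}^{(3)}(k),\ (k-1)j_{n+1}^{(3)}(k)+kj_{n}^{(3)}(k),\ kj_{n+1}^{(3)}(k)\bigr)$: the middle entry is $t_{n}^{(3)}(k)$ by definition, and the first entry equals $(k-1)j_{n+1}^{(3)}(k)+(k-1)j_{n}^{(3)}(k)+kj_{n-1}^{(3)}(k)=j_{n+2}^{(3)}(k)$ by the recurrence of Definition~\ref{d2}, which is exactly the first row of $Q_{n+1}$; the second and third rows are treated identically. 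Thus the induction step uses nothing beyond the defining recurrence of $\bigl(j_{n}^{(3)}(k)\bigr)$, precisely as in Proposition~\ref{prop1}.

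The only real task is therefore the base case $n=1$, i.e.\ checking that the ordinary product $\textrm{N}_{k,0}^{(3)}\,\textrm{M}_{k,1}^{(3)}$ equals $Q_{1}$. The lower two rows of $Q_{1}$ involve backward-extended values, which I would first compute from the recurrence solved for its lowest-index term, $j_{n}^{(3)}(k)=\tfrac{1}{k}\bigl(j_{n+3}^{(3)}(k)-(k-1)j_{n+2}^{(3)}(k)-(k-1)j_{n+1}^{(3)}(k)\bigr)$; this gives $j_{-1}^{(3)}(k)=\tfrac{2}{k}$ and $j_{-2}^{(3)}(k)=-\tfrac{(k-1)(k+2)}{k^{2}}$, hence $t_{-1}^{(3)}(k)=2k$ and $t_{-2}^{(3)}(k)=1-k$. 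A direct $3\times 3$ multiplication then matches every entry; for instance the first column of $\textrm{N}_{k,0}^{(3)}\,\textrm{M}_{k,1}^{(3)}$ comes out as $(k-1)^{2}+2k=k^{2}+1=j_{2}^{(3)}(k)$, then $2(k-1)+(1-k)=k-1=j_{1}^{(3)}(k)$, then $\tfrac{2}{k}(k-1)+\tfrac{2}{k}=2=j_{0}^{(3)}(k)$, and similarly for the other two columns.

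The main obstacle is not conceptual but clerical: one must use the unique backward extension of $\bigl(j_{n}^{(3)}(k)\bigr)$ consistently (just as the proof of Proposition~\ref{prop1} tacitly does with $J_{-1}^{(3)}(k)$ and $J_{-2}^{(3)}(k)$) and carry the fractional entries $\tfrac{2}{k}$ of $\textrm{N}_{k,0}^{(3)}$ through the base-case product, where they are exactly what collapse the lower rows down to the values $j_{0}^{(3)}(k)$, $1-k$ and $2$. Alternatively, the induction step can be made automatic: the characteristic polynomial of $\textrm{M}_{k,1}^{(3)}$ is $x^{3}-(k-1)x^{2}-(k-1)x-k$, so by the Cayley--Hamilton theorem $\bigl(\textrm{M}_{k,1}^{(3)}\bigr)^{n}$, and hence $P_{n}=\textrm{N}_{k,0}^{(3)}\bigl(\textrm{M}_{k,1}^{(3)}\bigr)^{n}$, satisfies $X_{n+3}=(k-1)X_{n+2}+(k-1)X_{n+1}+kX_{n}$ entrywise in $n$; since every entry of $Q_{n}$ satisfies the same recurrence, it then suffices to verify $P_{n}=Q_{n}$ for three consecutive values, say $n=1,2,3$.
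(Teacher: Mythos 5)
Your proof is correct and takes essentially the same route as the paper: induction on $n$ via $P_{n+1}=P_{n}\,\textrm{M}_{k,1}^{(3)}$, with the defining recurrence of $j_{n}^{(3)}(k)$ closing the first column and the definition of $t_{n}^{(3)}(k)$ closing the second. You are in fact more explicit than the paper about the base case (which it dismisses as ``similar to the previous theorem''), and your computed values $j_{-1}^{(3)}(k)=\tfrac{2}{k}$, $j_{-2}^{(3)}(k)=-\tfrac{(k-1)(k+2)}{k^{2}}$, $t_{-1}^{(3)}(k)=2k$, $t_{-2}^{(3)}(k)=1-k$ and the identification $\textrm{N}_{k,0}^{(3)}\textrm{M}_{k,1}^{(3)}=\textrm{N}_{k,1}^{(3)}$ all check out.
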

\begin{proof}
Similar to the previous theorem, the result holds for $n=1$ and by mathematical induction, it is enough to verify
\begin{align*}
\textrm{N}_{k,0}^{(3)}\left(\textrm{M}_{k,1}^{(3)}\right)^{n+1}&=\textrm{N}_{k,0}^{(3)}\left(\textrm{M}_{k,1}^{(3)}\right)^{n}\textrm{M}_{k,1}^{(3)}\\
&=\left[
\begin{array}{ccc}
j_{n+1}^{(3)}(k) & t_{n-1}^{(3)}(k) & kj_{n}^{(3)}(k) \\ 
j_{n}^{(3)}(k) & t_{n-2}^{(3)}(k) & kj_{n-1}^{(3)}(k)\\ 
j_{n-1}^{(3)}(k) & t_{n-3}^{(3)}(k) & kj_{n-2}^{(3)}(k)
\end{array}
\right]\left[
\begin{array}{ccc}
k-1& k-1& k \\ 
1&0& 0\\ 
0 & 1&0
\end{array}
\right]\\
&=\left[
\begin{array}{ccc}
(k-1)j_{n+1}^{(3)}(k)+t_{n-1}^{(3)}(k)  & (k-1)j_{n+1}^{(3)}(k)+kj_{n}^{(3)}(k)  & kj_{n+1}^{(3)}(k) \\ 
(k-1)j_{n}^{(3)}(k)+t_{n-2}^{(3)}(k)  & (k-1)j_{n}^{(3)}(k)+kj_{n-1}^{(3)}(k)  & kj_{n}^{(3)}(k) \\ 
(k-1)j_{n-1}^{(3)}(k)+t_{n-3}^{(3)}(k)  & (k-1)j_{n-1}^{(3)}(k)+kj_{n-2}^{(3)}(k)  & kj_{n-1}^{(3)}(k) 
\end{array}
\right]\\
&=\left[
\begin{array}{ccc}
j_{n+2}^{(3)}(k) & t_{n}^{(3)}(k) & kj_{n+1}^{(3)}(k) \\ 
j_{n+1}^{(3)}(k) & t_{n-1}^{(3)}(k) & kj_{n}^{(3)}(k)\\ 
j_{n}^{(3)}(k) & t_{n-2}^{(3)}(k) & kj_{n-1}^{(3)}(k)
\end{array}
\right],
\end{align*}
for any integer $n\geq 1$ and $t_{n}^{(3)}(k) =(k-1)j_{n+1}^{(3)}(k)+kj_{n}^{(3)}(k)$.
\end{proof}

In addition, we can also verify the behavior of the following determinants indicated by 
\begin{align*}
\det\left[\left(\textrm{M}_{k,1}^{(3)}\right)^{n}\right]&=\det\left[
\begin{array}{ccc}
J_{n+1}^{(3)}(k) & T_{n-1}^{(3)}(k) & kJ_{n}^{(3)}(k) \\ 
J_{n}^{(3)}(k) & T_{n-2}^{(3)}(k) & kJ_{n-1}^{(3)}(k)\\ 
J_{n-1}^{(3)}(k) & T_{n-3}^{(3)}(k) & kJ_{n-2}^{(3)}(k)
\end{array}
\right],\\
\det\left[\textrm{N}_{k,0}^{(3)}\left(\textrm{M}_{k,1}^{(3)}\right)^{n}\right]&=\det\left[
\begin{array}{ccc}
j_{n+1}^{(3)}(k) & t_{n-1}^{(3)}(k) & kj_{n}^{(3)}(k) \\ 
j_{n}^{(3)}(k) & t_{n-2}^{(3)}(k) & kj_{n-1}^{(3)}(k)\\ 
j_{n-1}^{(3)}(k) & t_{n-3}^{(3)}(k) & kj_{n-2}^{(3)}(k)
\end{array}
\right],
\end{align*}
where $T_{n}^{(3)}(k)$ and $t_{n}^{(3)}(k)$ as in propositions (\ref{prop1}) and (\ref{prop2}).

\begin{corollary}
For any integer $n\geq 1$, we obtain:
\begin{equation}
\det\left[\left(\emph{\textrm{M}}_{k,1}^{(3)}\right)^{n}\right]=k^{n},
\end{equation}
\begin{equation}
\det\left[\emph{\textrm{N}}_{k,0}^{(3)}\left(\emph{\textrm{M}}_{k,1}^{(3)}\right)^{n}\right]=(k+1)^{2}(k^{2}+k+2)k^{n-1}.
\end{equation}
\end{corollary}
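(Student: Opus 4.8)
The plan is to bypass the $3\times 3$ arrays of Propositions~\ref{prop1} and~\ref{prop2} altogether and instead invoke the multiplicativity of the determinant. For the first identity I would write
$$\det\left[\left(\textrm{M}_{k,1}^{(3)}\right)^{n}\right]=\left(\det \textrm{M}_{k,1}^{(3)}\right)^{n},$$
and evaluate $\det \textrm{M}_{k,1}^{(3)}$ by cofactor expansion along the bottom row $(0,1,0)$ of
$$\textrm{M}_{k,1}^{(3)}=\left[\begin{array}{ccc} k-1 & k-1 & k \\ 1 & 0 & 0 \\ 0 & 1 & 0 \end{array}\right].$$
Only the $(3,2)$ entry contributes, so $\det \textrm{M}_{k,1}^{(3)}=(-1)^{3+2}\det\left[\begin{array}{cc} k-1 & k \\ 1 & 0 \end{array}\right]=-(-k)=k$, whence $\det\left[\left(\textrm{M}_{k,1}^{(3)}\right)^{n}\right]=k^{n}$.

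For the second identity I would apply multiplicativity once more,
$$\det\left[\textrm{N}_{k,0}^{(3)}\left(\textrm{M}_{k,1}^{(3)}\right)^{n}\right]=\det \textrm{N}_{k,0}^{(3)}\cdot\left(\det \textrm{M}_{k,1}^{(3)}\right)^{n}=k^{n}\det \textrm{N}_{k,0}^{(3)},$$
so the whole statement reduces to computing $\det \textrm{N}_{k,0}^{(3)}$ (legitimate since $k>0$). Here I would first pull the common factor $\frac1k$ out of the third row, rewrite $k^{2}+k-2=(k-1)(k+2)$, and expand the resulting polynomial matrix along its first column. Collecting the three $2\times 2$ minors one runs into the factorizations $k^{3}-3k-2=(k+1)^{2}(k-2)$ and $k^{3}+2k^{2}+3k+2=(k+1)(k^{2}+k+2)$; combining these yields $\det \textrm{N}_{k,0}^{(3)}=\frac1k(k+1)^{2}(k^{2}+k+2)$, and multiplying by $k^{n}$ produces exactly $(k+1)^{2}(k^{2}+k+2)k^{n-1}$.

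The only genuine work is the bookkeeping in this last $3\times 3$ expansion together with spotting the two cubic factorizations — that is the step I expect to be the main (though still entirely elementary) obstacle. An alternative route is to evaluate the two determinants directly from the matrix forms in Propositions~\ref{prop1} and~\ref{prop2}, clearing the middle column via the relation $T_{n}^{(3)}(k)=(k-1)J_{n+1}^{(3)}(k)+kJ_{n}^{(3)}(k)$ (resp.\ its $t$-analogue) and then applying the defining recurrence of $J_{n}^{(3)}(k)$ (resp.\ $j_{n}^{(3)}(k)$) inside a single row; this reproduces the same answers but is longer than the multiplicativity argument. As a consistency check, specializing to $k=2$ recovers the classical third-order Jacobsthal determinant values $2^{n}$ and $9\cdot 2^{n+2}$.
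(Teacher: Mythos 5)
Your proof is correct, and it rests on the same engine as the paper's --- multiplicativity of the determinant applied to $\textrm{N}_{k,0}^{(3)}\left(\textrm{M}_{k,1}^{(3)}\right)^{n}$ --- the only cosmetic difference being that you apply $\det(AB)=\det A\cdot\det B$ once and for all instead of wrapping it in an induction on $n$. The substantive difference is in your favour: the paper's proof of the second formula merely ``admits'' the identity $\det\left[\textrm{N}_{k,0}^{(3)}\left(\textrm{M}_{k,1}^{(3)}\right)^{n}\right]=(k+1)^{2}(k^{2}+k+2)k^{n-1}$ at stage $n$ and pushes it to $n+1$, but it never verifies the base case, i.e.\ it never computes $\det \textrm{N}_{k,0}^{(3)}$. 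Your cofactor expansion supplies exactly that missing step, and your arithmetic checks out: pulling $\tfrac1k$ from the third row and expanding along the first column gives $\tfrac1k\left[(k-1)(k^{3}-3k-2)+4k^{2}(k+1)+4k(k+1)\right]=\tfrac1k(k+1)(k^{3}+2k^{2}+3k+2)=\tfrac1k(k+1)^{2}(k^{2}+k+2)$, so the product with $k^{n}$ is the claimed $(k+1)^{2}(k^{2}+k+2)k^{n-1}$. In short, your argument is not just a valid alternative; it completes the published one.
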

\begin{proof}
We note that $\det\left[\textrm{M}_{k,1}^{(3)}\right]=k$. By mathematical induction, let us write:
\begin{align*}
\det\left[\left(\textrm{M}_{k,1}^{(3)}\right)^{n+1}\right]&=\det\left[\left(\textrm{M}_{k,1}^{(3)}\right)^{n}\cdot \textrm{M}_{k,1}^{(3)}\right]\\
& =\det\left[\left(\textrm{M}_{k,1}^{(3)}\right)^{n}\right]\cdot \det\left[\textrm{M}_{k,1}^{(3)}\right]\\
& =k^{n}\cdot k=k^{n+1}.
\end{align*}
Similarly, let us admit that $\det\left[\textrm{N}_{k,0}^{(3)}\left(\textrm{M}_{k,1}^{(3)}\right)^{n}\right]=(k+1)^{2}(k^{2}+k+2)k^{n-1}$. Thus, we can see that 
\begin{align*}
\det\left[\textrm{N}_{k,0}^{(3)}\left(\textrm{M}_{k,1}^{(3)}\right)^{n+1}\right]&=\det\left[\textrm{N}_{k,0}^{(3)}\left(\textrm{M}_{k,1}^{(3)}\right)^{n}\cdot \textrm{M}_{k,1}^{(3)}\right]\\
& =\det\left[\textrm{N}_{k,0}^{(3)}\left(\textrm{M}_{k,1}^{(3)}\right)^{n}\right]\cdot \det\left[\textrm{M}_{k,1}^{(3)}\right]\\
& =(k+1)^{2}(k^{2}+k+2)k^{n-1}\cdot k\\
& = (k+1)^{2}(k^{2}+k+2)k^{n}.
\end{align*}
\end{proof}

\section{Other demonstrations for the commutative properties}
For any integer $n\geq 1$, we define
$$\textrm{J}_{n}(k)=\left(\textrm{M}_{k,1}^{(3)}\right)^{n}$$ and $$\textrm{j}_{n}(k)=\textrm{N}_{k,0}^{(3)}\left(\textrm{M}_{k,1}^{(3)}\right)^{n},$$ are two matrices of order 3, with real coefficients.

In this section, we will discuss other simpler and more immediate ways to demonstrate the matrix properties of the matrices previously defined.

\begin{theorem}
For $m,\ n \geq 1$, the following results hold:
\begin{equation}
\emph{\textrm{J}}_{m+n}(k)=\emph{\textrm{J}}_{m}(k)\cdot \emph{\textrm{J}}_{n}(k)=\emph{\textrm{J}}_{n}(k) \cdot \emph{\textrm{J}}_{m}(k),
\end{equation}
\begin{equation}
\emph{\textrm{j}}_{m}(k)\cdot \emph{\textrm{j}}_{n}(k)=\emph{\textrm{j}}_{n}(k) \cdot \emph{\textrm{j}}_{m}(k),
\end{equation}
\begin{equation}
\emph{\textrm{J}}_{m}(k)\cdot \emph{\textrm{j}}_{n}(k)=\emph{\textrm{j}}_{n}(k) \cdot \emph{\textrm{J}}_{m}(k),
\end{equation}
\end{theorem}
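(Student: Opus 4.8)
The plan is to reduce all three identities to a single structural fact: the matrix $\textrm{N}_{k,0}^{(3)}$ lies in the commutative algebra generated by $\textrm{M}_{k,1}^{(3)}$. Writing $M=\textrm{M}_{k,1}^{(3)}$ and $N=\textrm{N}_{k,0}^{(3)}$ for brevity, I would first prove the auxiliary identity
\[
N=\frac{1}{k}\Bigl(2M^{2}+2M-(k^{2}+k-2)I\Bigr),
\]
using the explicit $3\times 3$ matrices displayed above together with $M^{2}=\textrm{M}_{k,2}^{(3)}$ (which is the case $n=2$ of Proposition~\ref{prop1}). This is checked by a direct entrywise comparison; it also explains the $\tfrac{1}{k}$ entries appearing in $\textrm{N}_{k,0}^{(3)}$. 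An equivalent, even shorter route is to verify the single commutation relation $NM=MN$ by multiplying the two matrices in both orders. Either way, an immediate induction on $p$ then gives $NM^{p}=M^{p}N$ for every $p\ge 0$.

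Granting this, the three statements follow by bookkeeping with associativity. For the first, $\textrm{J}_{m+n}(k)=M^{m+n}=M^{m}M^{n}=\textrm{J}_{m}(k)\,\textrm{J}_{n}(k)$, and since powers of a fixed matrix commute this also equals $\textrm{J}_{n}(k)\,\textrm{J}_{m}(k)$. For the third, pushing $N$ past $M^{m}$ and then past $M^{m+n}$ gives
\[
\textrm{J}_{m}(k)\,\textrm{j}_{n}(k)=M^{m}NM^{n}=NM^{m+n}=NM^{n}M^{m}=\textrm{j}_{n}(k)\,\textrm{J}_{m}(k).
\]
For the second, the same manoeuvre yields
\[
\textrm{j}_{m}(k)\,\textrm{j}_{n}(k)=NM^{m}NM^{n}=N^{2}M^{m}M^{n}=N^{2}M^{m+n},
\]
and the right-hand side is symmetric in $m$ and $n$, hence equals $\textrm{j}_{n}(k)\,\textrm{j}_{m}(k)$.

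The only genuine computation is the auxiliary identity $N=\tfrac{1}{k}(2M^{2}+2M-(k^{2}+k-2)I)$, equivalently the commutation $NM=MN$; I expect this to be the main obstacle, and it is purely mechanical, the one delicate point being to carry the $1/k$ denominators through the product correctly and to confirm the coefficient $-(k^{2}+k-2)/k$ of the identity term. Once it is in place, everything else in this section becomes transparent: all the matrices in sight live in the commutative ring $\mathbb{R}[M]$, and (after recalling $\det M=k\neq 0$, so that $M$ is invertible) the same argument extends these commutativity relations to negative indices.
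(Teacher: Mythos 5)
Your proof is correct and follows essentially the same route as the paper: both arguments reduce all three identities to associativity of matrix multiplication together with the single commutation relation $\textrm{N}_{k,0}^{(3)}\textrm{M}_{k,1}^{(3)}=\textrm{M}_{k,1}^{(3)}\textrm{N}_{k,0}^{(3)}$, after which one simply slides the factors past each other. The one difference is in your favour: the paper merely ``records'' this commutation without justification, whereas you actually establish it via the (correct, I checked it entrywise) identity $\textrm{N}_{k,0}^{(3)}=\tfrac{1}{k}\bigl(2(\textrm{M}_{k,1}^{(3)})^{2}+2\textrm{M}_{k,1}^{(3)}-(k^{2}+k-2)\textrm{I}\bigr)$, which exhibits $\textrm{N}_{k,0}^{(3)}$ as a polynomial in $\textrm{M}_{k,1}^{(3)}$ and is just the multiplied-through form of the relation $\textrm{N}_{k,0}^{(3)}=(k-1)\textrm{I}+2k(\textrm{M}_{k,1}^{(3)})^{-1}+2k(\textrm{M}_{k,1}^{(3)})^{-2}$ that the paper only verifies later, in Proposition \ref{pr}.
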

\begin{proof}
For the first item, we see that the following commutative properties occur
\begin{align*}
\textrm{J}_{m+n}(k)=\left(\textrm{M}_{k,1}^{(3)}\right)^{m+n}&= \left(\textrm{M}_{k,1}^{(3)}\right)^{m}\cdot \left(\textrm{M}_{k,1}^{(3)}\right)^{n}\\
& = \left(\textrm{M}_{k,1}^{(3)}\right)^{n}\cdot \left(\textrm{M}_{k,1}^{(3)}\right)^{m}\\
&  = \textrm{J}_{n+m}(k).
\end{align*}
The second equation of this theorem, let us also see that 
 \begin{align*}
\textrm{j}_{m}(k)\cdot \textrm{j}_{n}(k)&=\textrm{N}_{k,0}^{(3)}\left(\textrm{M}_{k,1}^{(3)}\right)^{m} \cdot \textrm{N}_{k,0}^{(3)}\left(\textrm{M}_{k,1}^{(3)}\right)^{n}\\
& = \textrm{N}_{k,0}^{(3)}\left(\textrm{M}_{k,1}^{(3)}\right)^{n} \cdot \textrm{N}_{k,0}^{(3)}\left(\textrm{M}_{k,1}^{(3)}\right)^{m}\\
&  = \textrm{j}_{n}(k)\cdot \textrm{j}_{m}(k)
\end{align*}
and we record the commutative property of $\textrm{N}_{k,0}^{(3)}\cdot \textrm{M}_{k,1}^{(3)}=\textrm{M}_{k,1}^{(3)}\cdot \textrm{N}_{k,0}^{(3)}$. To conclude, let us easily see that $$\textrm{J}_{m}(k)\cdot \textrm{j}_{n}(k)=\left(\textrm{M}_{k,1}^{(3)}\right)^{m} \cdot \textrm{N}_{k,0}^{(3)}\left(\textrm{M}_{k,1}^{(3)}\right)^{n}= \textrm{N}_{k,0}^{(3)}\left(\textrm{M}_{k,1}^{(3)}\right)^{n}\cdot  \left(\textrm{M}_{k,1}^{(3)}\right)^{m}=\textrm{j}_{n}(k) \cdot \textrm{J}_{m}(k).$$
\end{proof}

\begin{proposition}\label{pr}
For $n \geq 2$, the following results holds:
\begin{equation}\label{eq:1}
\emph{\textrm{j}}_{n}(k)=(k-1)\emph{\textrm{J}}_{n}(k)+2k\emph{\textrm{J}}_{n-1}(k)+2k\emph{\textrm{J}}_{n-2}(k),
\end{equation}
\begin{equation}\label{eq:2}
\emph{\textrm{j}}_{n}(k)=2\emph{\textrm{J}}_{n+1}(k)+(1-k)\emph{\textrm{J}}_{n}(k)+2\emph{\textrm{J}}_{n-1}(k).
\end{equation}
\end{proposition}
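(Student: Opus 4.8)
The plan is to reduce both identities, by right-multiplication by powers of $\textrm{M}_{k,1}^{(3)}$, to one explicit $3\times 3$ verification. Since $\textrm{J}_{n}(k)=\big(\textrm{M}_{k,1}^{(3)}\big)^{n}$ and $\textrm{j}_{n}(k)=\textrm{N}_{k,0}^{(3)}\big(\textrm{M}_{k,1}^{(3)}\big)^{n}$, multiplying the single relation
\begin{equation}\label{eq:basecase}
\textrm{N}_{k,0}^{(3)}\big(\textrm{M}_{k,1}^{(3)}\big)^{2}=(k-1)\big(\textrm{M}_{k,1}^{(3)}\big)^{2}+2k\,\textrm{M}_{k,1}^{(3)}+2k\,\textrm{M}_{k,0}^{(3)}
\end{equation}
on the right by $\big(\textrm{M}_{k,1}^{(3)}\big)^{n-2}$ yields exactly $(\ref{eq:1})$ for every $n\geq 2$; here $\textrm{M}_{k,0}^{(3)}$ is the $3\times 3$ identity, so $\big(\textrm{M}_{k,1}^{(3)}\big)^{0}=\textrm{M}_{k,0}^{(3)}$ and the term $\textrm{J}_{0}(k)$ occurring for $n=2$ is read as $\textrm{M}_{k,0}^{(3)}$. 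Thus it is enough to prove $(\ref{eq:basecase})$.

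To prove $(\ref{eq:basecase})$ I would first record $\big(\textrm{M}_{k,1}^{(3)}\big)^{2}=\textrm{M}_{k,2}^{(3)}$, which is Proposition $\ref{prop1}$ at $n=2$ evaluated with $J_{0}^{(3)}(k)=0$, $J_{1}^{(3)}(k)=1$, $J_{2}^{(3)}(k)=k-1$ and $J_{-1}^{(3)}(k)=0$ (the last forced by $J_{2}^{(3)}(k)=(k-1)J_{1}^{(3)}(k)+(k-1)J_{0}^{(3)}(k)+kJ_{-1}^{(3)}(k)$), or equally well a direct squaring of $\textrm{M}_{k,1}^{(3)}$. Then $(\ref{eq:basecase})$ becomes the entrywise equality $\textrm{N}_{k,0}^{(3)}\textrm{M}_{k,2}^{(3)}=(k-1)\textrm{M}_{k,2}^{(3)}+2k\,\textrm{M}_{k,1}^{(3)}+2k\,\textrm{M}_{k,0}^{(3)}$ between fully explicit matrices taken from the definitions of the sequences $\big(\textrm{M}_{k,n}^{(3)}\big)$ and $\big(\textrm{N}_{k,n}^{(3)}\big)$, which is a finite and routine computation; along the way one sees the left-hand side equals $\textrm{N}_{k,2}^{(3)}$. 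If one prefers to avoid the entries $\tfrac{2}{k}$ of $\textrm{N}_{k,0}^{(3)}$, an alternative is to invoke Propositions $\ref{prop1}$ and $\ref{prop2}$: each entry of $\textrm{J}_{n}(k)$ is a fixed $\mathbb{R}$-linear combination of the $J_{m}^{(3)}(k)$ and the matching entry of $\textrm{j}_{n}(k)$ the same combination of the $j_{m}^{(3)}(k)$ (the middle columns using $T_{m}^{(3)}(k)=(k-1)J_{m+1}^{(3)}(k)+kJ_{m}^{(3)}(k)$ and $t_{m}^{(3)}(k)=(k-1)j_{m+1}^{(3)}(k)+kj_{m}^{(3)}(k)$), so $(\ref{eq:1})$ collapses to the scalar identity $j_{m}^{(3)}(k)=(k-1)J_{m}^{(3)}(k)+2kJ_{m-1}^{(3)}(k)+2kJ_{m-2}^{(3)}(k)$; both of its sides satisfy $x_{m+3}=(k-1)x_{m+2}+(k-1)x_{m+1}+kx_{m}$, so one only checks $m=0,1,2$, which is immediate from the initial data together with $J_{-1}^{(3)}(k)=0$ and $J_{-2}^{(3)}(k)=1/k$.

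For $(\ref{eq:2})$ I would not repeat the argument but deduce it from $(\ref{eq:basecase})$ using the Cayley--Hamilton relation for the companion matrix $\textrm{M}_{k,1}^{(3)}$, whose characteristic polynomial is $x^{3}-(k-1)x^{2}-(k-1)x-k$; hence $\big(\textrm{M}_{k,1}^{(3)}\big)^{3}=(k-1)\big(\textrm{M}_{k,1}^{(3)}\big)^{2}+(k-1)\textrm{M}_{k,1}^{(3)}+k\,\textrm{M}_{k,0}^{(3)}$, i.e. $2k\,\textrm{M}_{k,0}^{(3)}=2\big(\textrm{M}_{k,1}^{(3)}\big)^{3}-2(k-1)\big(\textrm{M}_{k,1}^{(3)}\big)^{2}-2(k-1)\textrm{M}_{k,1}^{(3)}$. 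Substituting this for the $2k\,\textrm{M}_{k,0}^{(3)}$ term on the right of $(\ref{eq:basecase})$ and collecting the coefficients of $\big(\textrm{M}_{k,1}^{(3)}\big)^{3}$, $\big(\textrm{M}_{k,1}^{(3)}\big)^{2}$ and $\textrm{M}_{k,1}^{(3)}$ gives $\textrm{N}_{k,0}^{(3)}\big(\textrm{M}_{k,1}^{(3)}\big)^{2}=2\big(\textrm{M}_{k,1}^{(3)}\big)^{3}+(1-k)\big(\textrm{M}_{k,1}^{(3)}\big)^{2}+2\textrm{M}_{k,1}^{(3)}$; right-multiplying by $\big(\textrm{M}_{k,1}^{(3)}\big)^{n-2}$ then yields $(\ref{eq:2})$ for all $n\geq 2$.

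The only genuine obstacle is the base identity $(\ref{eq:basecase})$, and even that is just careful bookkeeping: once $\big(\textrm{M}_{k,1}^{(3)}\big)^{2}=\textrm{M}_{k,2}^{(3)}$ is in hand it is a single $3\times 3$ multiplication, the only subtlety being the fractions $\tfrac{2}{k}$ in $\textrm{N}_{k,0}^{(3)}$ (or, in the scalar formulation, the negative-index values $J_{-1}^{(3)}(k)$, $J_{-2}^{(3)}(k)$); everything after that is purely formal manipulation of powers of $\textrm{M}_{k,1}^{(3)}$.
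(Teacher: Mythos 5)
Your proof is correct, and it reaches the same destination as the paper's --- one explicit $3\times 3$ identity that is then propagated by powers of $\textrm{M}_{k,1}^{(3)}$ --- but by a noticeably different route. The paper factors $\left(\textrm{M}_{k,1}^{(3)}\right)^{n}$ out on the \emph{left}, so the residual identity to be checked is $(k-1)\textrm{I}+2k\left(\textrm{M}_{k,1}^{(3)}\right)^{-1}+2k\left(\textrm{M}_{k,1}^{(3)}\right)^{-2}=\textrm{N}_{k,0}^{(3)}$, verified by writing out the two inverse matrices explicitly; it then still needs the commutation $\left(\textrm{M}_{k,1}^{(3)}\right)^{n}\textrm{N}_{k,0}^{(3)}=\textrm{N}_{k,0}^{(3)}\left(\textrm{M}_{k,1}^{(3)}\right)^{n}$ to land on $\textrm{j}_{n}(k)$ as defined. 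You factor $\left(\textrm{M}_{k,1}^{(3)}\right)^{n-2}$ out on the \emph{right}, so your residual identity $\textrm{N}_{k,0}^{(3)}\textrm{M}_{k,2}^{(3)}=(k-1)\textrm{M}_{k,2}^{(3)}+2k\textrm{M}_{k,1}^{(3)}+2k\textrm{M}_{k,0}^{(3)}\ (=\textrm{N}_{k,2}^{(3)})$ involves only non-negative powers, no fractions beyond those already in $\textrm{N}_{k,0}^{(3)}$, and --- a small but real economy --- no appeal to commutativity at all, since the left factor $\textrm{N}_{k,0}^{(3)}$ is already in the position the definition of $\textrm{j}_{n}(k)$ requires. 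Your treatment of (\ref{eq:2}) is also genuinely different: the paper dismisses it as ``similar,'' whereas you derive it from (\ref{eq:1}) in one line via the Cayley--Hamilton relation $\left(\textrm{M}_{k,1}^{(3)}\right)^{3}=(k-1)\left(\textrm{M}_{k,1}^{(3)}\right)^{2}+(k-1)\textrm{M}_{k,1}^{(3)}+k\textrm{I}$, which makes transparent that the two decompositions of $\textrm{j}_{n}(k)$ are equivalent modulo the characteristic polynomial rather than two independent facts. All the supporting details check out: $\left(\textrm{M}_{k,1}^{(3)}\right)^{2}=\textrm{M}_{k,2}^{(3)}$, the negative-index values $J_{-1}^{(3)}(k)=0$ and $J_{-2}^{(3)}(k)=1/k$, the initial checks $m=0,1,2$ of the scalar identity, and the coefficient bookkeeping $2k\textrm{I}\mapsto 2\left(\textrm{M}_{k,1}^{(3)}\right)^{3}-2(k-1)\left(\textrm{M}_{k,1}^{(3)}\right)^{2}-2(k-1)\textrm{M}_{k,1}^{(3)}$ leading to the coefficients $2$, $1-k$, $2$ of (\ref{eq:2}). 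The one point worth stating explicitly in a final write-up is the convention $\textrm{J}_{0}(k)=\textrm{M}_{k,0}^{(3)}=\textrm{I}$, since the paper only defines $\textrm{J}_{n}(k)$ for $n\geq 1$ and the case $n=2$ of (\ref{eq:1}) uses it; you do flag this.
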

\begin{proof}
(\ref{eq:1}): For $n \geq 2$, we can get
\begin{align*}
(k-1)\emph{\textrm{J}}_{n}(k)+&2k\emph{\textrm{J}}_{n-1}(k)+2k\emph{\textrm{J}}_{n-2}(k)\\
& =(k-1)\cdot \left(\textrm{M}_{k,1}^{(3)}\right)^{n}+2k\cdot \left(\textrm{M}_{k,1}^{(3)}\right)^{n-1}+2k\cdot \left(\textrm{M}_{k,1}^{(3)}\right)^{n-2}\\
& =\left(\textrm{M}_{k,1}^{(3)}\right)^{n} \left[(k-1)\cdot \textrm{I}+2k\cdot \left(\textrm{M}_{k,1}^{(3)}\right)^{-1}+2k\cdot \left(\textrm{M}_{k,1}^{(3)}\right)^{-2}\right].
\end{align*}
On the other hand, we can determine that
\begin{align*}
(k-1)\cdot \textrm{I}+&2k\cdot \left(\textrm{M}_{k,1}^{(3)}\right)^{-1}+2k\cdot \left(\textrm{M}_{k,1}^{(3)}\right)^{-2}\\
& =(k-1)\left[
\begin{array}{ccc}
1& 0& 0 \\ 
0& 1 & 0\\ 
0& 0& 1
\end{array}
\right]+2k\left[
\begin{array}{ccc}
0& 1& 0 \\ 
0& 0 & 1\\ 
\frac{1}{k}& \frac{1-k}{k} & \frac{1-k}{k}
\end{array}
\right]\\
& \ +2k\left[
\begin{array}{ccc}
0& 0 & 1\\ 
\frac{1}{k}& \frac{1-k}{k} & \frac{1-k}{k}\\
\frac{1-k}{k^{2}}& \frac{k^{2}-k+1}{k^{2}} & \frac{1-k}{k^{2}}
\end{array}
\right]\\
& =\left[
\begin{array}{ccc}
k-1& 2k& 2k \\ 
2&1-k& 2\\ 
\frac{2}{k} & \frac{2}{k} &-\frac{1}{k}(k^{2}+k-2)
\end{array}
\right]= \textrm{N}_{k,0}^{(3)}.
\end{align*}
Finally, we determine equality $$(k-1)\emph{\textrm{J}}_{n}(k)+2k\emph{\textrm{J}}_{n-1}(k)+2k\emph{\textrm{J}}_{n-2}(k)=\left(\textrm{M}_{k,1}^{(3)}\right)^{n} \cdot  \textrm{N}_{k,0}^{(3)}=\textrm{N}_{k,0}^{(3)}\cdot \left(\textrm{M}_{k,1}^{(3)}\right)^{n} =\textrm{j}_{n}(k).$$

(\ref{eq:2}) The proof is similar to the proof of (\ref{eq:1}).
\end{proof}

\begin{theorem}
For $n \geq 1$, the following results holds:
\begin{equation}\label{a:1}
\left(\emph{\textrm{j}}_{n+1}(k)\right)^{2}=\left(\emph{\textrm{j}}_{1}(k)\right)^{2}\cdot \emph{\textrm{J}}_{2n}(k),
\end{equation}
\begin{equation}\label{a:2}
\emph{\textrm{j}}_{2n+1}(k)=\emph{\textrm{J}}_{n}(k)\cdot \emph{\textrm{j}}_{n+1}(k).
\end{equation}
\end{theorem}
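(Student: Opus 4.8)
Write $M=\textrm{M}_{k,1}^{(3)}$ and $N=\textrm{N}_{k,0}^{(3)}$, so that by definition $\textrm{J}_n(k)=M^n$ and $\textrm{j}_n(k)=NM^n$. The entire proof rests on the single observation that $N$ and $M$ commute: indeed, in the proof of Proposition~\ref{pr} it was shown that $N=(k-1)\textrm{I}+2kM^{-1}+2kM^{-2}$, so $N$ is a (Laurent) polynomial in $M$ and therefore $NM=MN$. Consequently $N$ also commutes with every power $M^j$, and any word in the letters $N$ and $M$ can be rearranged freely.

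\medskip
For \eqref{a:2}, the plan is a one-line computation: since $M$ and $N$ commute,
$$\textrm{J}_{n}(k)\cdot \textrm{j}_{n+1}(k)=M^{n}\bigl(NM^{n+1}\bigr)=N\,M^{n}M^{n+1}=N M^{2n+1}=\textrm{j}_{2n+1}(k),$$
which is exactly the claimed identity.

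\medskip
For \eqref{a:1}, I would first expand the left-hand side and commute the two copies of $N$ together:
$$\bigl(\textrm{j}_{n+1}(k)\bigr)^{2}=\bigl(NM^{n+1}\bigr)\bigl(NM^{n+1}\bigr)=N^{2}M^{2n+2}.$$
Then I would expand the right-hand side the same way, using $\textrm{j}_1(k)=NM$ and $\textrm{J}_{2n}(k)=M^{2n}$:
$$\bigl(\textrm{j}_{1}(k)\bigr)^{2}\cdot \textrm{J}_{2n}(k)=\bigl(NM\bigr)\bigl(NM\bigr)M^{2n}=N^{2}M^{2}M^{2n}=N^{2}M^{2n+2}.$$
Comparing the two displays gives \eqref{a:1}. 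The only nontrivial ingredient—and hence the "hard part," though it is already in hand from Proposition~\ref{pr}—is the commutativity $NM=MN$; once that is invoked, both statements reduce to elementary rearrangement of matrix products, and no case analysis or induction on $n$ is needed.
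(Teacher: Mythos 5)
Your proposal is correct and follows essentially the same route as the paper: both proofs reduce everything to the commutativity of $\textrm{N}_{k,0}^{(3)}$ with powers of $\textrm{M}_{k,1}^{(3)}$ and then rearrange the matrix products, with the paper grouping the factors as $\textrm{j}_{1}(k)\cdot \left(\textrm{M}_{k,1}^{(3)}\right)^{n}\cdot \textrm{j}_{1}(k)\cdot \left(\textrm{M}_{k,1}^{(3)}\right)^{n}$ rather than collecting $N^{2}$ up front. Your explicit justification of $NM=MN$ via the Laurent-polynomial expression $N=(k-1)\textrm{I}+2k M^{-1}+2k M^{-2}$ from Proposition~\ref{pr} is a welcome touch, since the paper merely records that commutativity in an earlier theorem.
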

\begin{proof}
(\ref{a:1}) In this case, we can determine directly from the definition that:
\begin{align*}
\left(\textrm{j}_{n+1}(k)\right)^{2}&=\textrm{j}_{n+1}(k) \cdot \textrm{j}_{n+1}(k)\\
&=\textrm{N}_{k,0}^{(3)}\cdot \left(\textrm{M}_{k,1}^{(3)}\right)^{n+1}\cdot \textrm{N}_{k,0}^{(3)}\cdot \left(\textrm{M}_{k,1}^{(3)}\right)^{n+1}\\
&=\textrm{N}_{k,0}^{(3)}\textrm{M}_{k,1}^{(3)}\cdot \left(\textrm{M}_{k,1}^{(3)}\right)^{n}\cdot \textrm{N}_{k,0}^{(3)}\textrm{M}_{k,1}^{(3)}\cdot \left(\textrm{M}_{k,1}^{(3)}\right)^{n}\\
&=\textrm{j}_{1}(k)\cdot \left(\textrm{M}_{k,1}^{(3)}\right)^{n}\cdot \textrm{j}_{1}(k)\cdot \left(\textrm{M}_{k,1}^{(3)}\right)^{n}\\
&=\left(\textrm{j}_{1}(k)\right)^{2}\cdot \left(\textrm{M}_{k,1}^{(3)}\right)^{2n}\\
&=\left(\textrm{j}_{1}(k)\right)^{2}\cdot \textrm{J}_{2n}(k).
\end{align*}

(\ref{a:2}) Similarly, we see that
\begin{align*}
\textrm{j}_{2n+1}(k)&=\textrm{N}_{k,0}^{(3)}\left(\textrm{M}_{k,1}^{(3)}\right)^{2n+1}\\
&=\textrm{N}_{k,0}^{(3)}\left(\textrm{M}_{k,1}^{(3)}\right)^{n}\cdot \left(\textrm{M}_{k,1}^{(3)}\right)^{n+1}\\
&=\left(\textrm{M}_{k,1}^{(3)}\right)^{n}\cdot \textrm{N}_{k,0}^{(3)}\left(\textrm{M}_{k,1}^{(3)}\right)^{n+1}\\
&=\textrm{J}_{n}(k)\cdot \textrm{j}_{n+1}(k).
\end{align*}
\end{proof}
\begin{theorem}
For any integers $m, n \geq 1$, we obtain $$\emph{\textrm{j}}_{m+n}(k)=\emph{\textrm{j}}_{m}(k)\cdot \emph{\textrm{J}}_{n}(k)=\emph{\textrm{J}}_{m}(k)\cdot \emph{\textrm{j}}_{n}(k).$$
\end{theorem}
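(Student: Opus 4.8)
The plan is to reduce everything to the two definitions introduced at the start of this section, $\textrm{J}_{n}(k)=\left(\textrm{M}_{k,1}^{(3)}\right)^{n}$ and $\textrm{j}_{n}(k)=\textrm{N}_{k,0}^{(3)}\left(\textrm{M}_{k,1}^{(3)}\right)^{n}$, together with the single commutation fact $\textrm{N}_{k,0}^{(3)}\cdot \textrm{M}_{k,1}^{(3)}=\textrm{M}_{k,1}^{(3)}\cdot \textrm{N}_{k,0}^{(3)}$, which was already recorded in the proof of the first theorem of this section (and which is in any case a consequence of Proposition~\ref{pr}, since there $\textrm{N}_{k,0}^{(3)}$ is exhibited as a polynomial in $\textrm{M}_{k,1}^{(3)}$ and $\left(\textrm{M}_{k,1}^{(3)}\right)^{-1}$). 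An immediate induction on the exponent then upgrades this to $\textrm{N}_{k,0}^{(3)}\left(\textrm{M}_{k,1}^{(3)}\right)^{j}=\left(\textrm{M}_{k,1}^{(3)}\right)^{j}\textrm{N}_{k,0}^{(3)}$ for every $j\geq 0$.

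First I would write $\textrm{j}_{m+n}(k)=\textrm{N}_{k,0}^{(3)}\left(\textrm{M}_{k,1}^{(3)}\right)^{m+n}$ and split the power as $\left(\textrm{M}_{k,1}^{(3)}\right)^{m+n}=\left(\textrm{M}_{k,1}^{(3)}\right)^{m}\left(\textrm{M}_{k,1}^{(3)}\right)^{n}$. Grouping the first two factors gives $\left[\textrm{N}_{k,0}^{(3)}\left(\textrm{M}_{k,1}^{(3)}\right)^{m}\right]\left(\textrm{M}_{k,1}^{(3)}\right)^{n}=\textrm{j}_{m}(k)\cdot \textrm{J}_{n}(k)$, which is the first claimed identity. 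Next I would instead push $\textrm{N}_{k,0}^{(3)}$ past $\left(\textrm{M}_{k,1}^{(3)}\right)^{m}$ using the (iterated) commutation relation, obtaining $\textrm{N}_{k,0}^{(3)}\left(\textrm{M}_{k,1}^{(3)}\right)^{m+n}=\left(\textrm{M}_{k,1}^{(3)}\right)^{m}\textrm{N}_{k,0}^{(3)}\left(\textrm{M}_{k,1}^{(3)}\right)^{n}=\textrm{J}_{m}(k)\cdot \textrm{j}_{n}(k)$, which is the second identity; combining the two displays finishes the proof.

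There is essentially no genuine obstacle here: the statement is a bookkeeping consequence of associativity of matrix multiplication plus the commutativity of $\textrm{N}_{k,0}^{(3)}$ with $\textrm{M}_{k,1}^{(3)}$. The only point that deserves a line of justification is the step $\textrm{N}_{k,0}^{(3)}\left(\textrm{M}_{k,1}^{(3)}\right)^{m}=\left(\textrm{M}_{k,1}^{(3)}\right)^{m}\textrm{N}_{k,0}^{(3)}$, which I would dispatch by induction on $m$ from the base case $m=1$, exactly as the base commutation was used in the proofs above. (If one prefers, the same two identities also drop out of the explicit entrywise formulas of Propositions~\ref{prop1} and~\ref{prop2} via the third-order recurrence for $J_{n}^{(3)}(k)$ and $j_{n}^{(3)}(k)$, but the matrix argument is shorter and is the "another way" the paper is advertising.)
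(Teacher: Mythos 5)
Your proposal is correct and follows essentially the same route as the paper: both arguments unfold the definition $\textrm{j}_{m+n}(k)=\textrm{N}_{k,0}^{(3)}\left(\textrm{M}_{k,1}^{(3)}\right)^{m+n}$, split the power by associativity to get the first identity, and obtain the second by commuting $\textrm{N}_{k,0}^{(3)}$ (equivalently $\textrm{j}_{0}(k)$) past the power of $\textrm{M}_{k,1}^{(3)}$. Your explicit remark that the commutation for general exponent $m$ requires a short induction from the base case $\textrm{N}_{k,0}^{(3)}\textrm{M}_{k,1}^{(3)}=\textrm{M}_{k,1}^{(3)}\textrm{N}_{k,0}^{(3)}$ is a small point the paper glosses over, but it is not a different method.
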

\begin{proof}
Let us consider that $$\textrm{j}_{m+n}(k)=\textrm{N}_{k,0}^{(3)}\left(\textrm{M}_{k,1}^{(3)}\right)^{m+n}=\textrm{N}_{k,0}^{(3)}\left(\textrm{M}_{k,1}^{(3)}\right)^{n}\cdot \left(\textrm{M}_{k,1}^{(3)}\right)^{m}=\textrm{j}_{n}(k)\textrm{J}_{m}(k).$$ In addition, we can write immediately that $$\textrm{j}_{m+n}(k)=\textrm{j}_{0}(k)\textrm{J}_{m+n}(k)=\textrm{j}_{0}(k)\textrm{J}_{n}(k)\cdot \textrm{J}_{m}(k)=\textrm{J}_{n}(k)\cdot \textrm{j}_{0}(k)\textrm{J}_{m}(k)=\textrm{J}_{n}(k)\cdot \textrm{j}_{m}(k)$$ since, we know the commutativity of the matrix product $\textrm{J}_{1}(k)\cdot \textrm{j}_{0}(k)=\textrm{j}_{0}(k)\cdot \textrm{J}_{1}(k)$. The proof is completed.
\end{proof}

\section{Matrix sequence properties for negative indices}
Now, we will develop the study of certain properties determined by the following inverse third-order $k$-Jacobsthal matrix indicated by $\left(\textrm{M}_{k,1}^{(3)}\right)^{-n}$. We can immediately determine some particular cases
$$
\left(\textrm{M}_{k,1}^{(3)}\right)^{-1}=\left[
\begin{array}{ccc}
0& 1& 0 \\ 
0& 0 & 1\\ 
\frac{1}{k}& \frac{1-k}{k}& \frac{1-k}{k}
\end{array}
\right],$$ and 
$$\left(\textrm{M}_{k,1}^{(3)}\right)^{-2}=\left[
\begin{array}{ccc}
0& 0 & 1\\ 
\frac{1}{k}& \frac{1-k}{k}& \frac{1-k}{k}\\
\frac{1-k}{k^{2}}& \frac{1-k+k^{2}}{k^{2}}& \frac{1-k}{k^{2}}
\end{array}
\right].$$

We have observed that the elements of the type $J_{-n}^{(3)}(k)$, for a positive integer $n\geq 0$ can be determined directly from the recurrence relation indicated by 
$$J_{-n}^{(3)}(k)=\frac{1-k}{k}J_{-(n-1)}^{(3)}(k)+\frac{1-k}{k}J_{-(n-2)}^{(3)}(k)+\frac{1}{k}J_{-(n-3)}^{(3)}(k).$$ From these preliminary examples, we will state the following theorem.

\begin{theorem}
For any integer $n\geq 1$, we obtain
\begin{align*}
\emph{\textrm{J}}_{-n}(k)&=\left[
\begin{array}{ccc}
J_{-(n-1)}^{(3)}(k) & T_{-(n+1)}^{(3)}(k) & kJ_{-n}^{(3)}(k) \\ 
J_{-n}^{(3)}(k) & T_{-(n+2)}^{(3)}(k) & kJ_{-(n+1)}^{(3)}(k)\\ 
J_{-(n+1)}^{(3)}(k) & T_{-(n+3)}^{(3)}(k) & kJ_{-(n+2)}^{(3)}(k)
\end{array}
\right],\\
\emph{\textrm{J}}_{-n}(k)&=\left(\textrm{M}_{k,1}^{(3)}\right)^{-n}=\left[\left(\textrm{M}_{k,1}^{(3)}\right)^{-1}\right]^{n}=\left[\left(\textrm{M}_{k,1}^{(3)}\right)^{n}\right]^{-1},
\end{align*}
where $T_{-n}^{(3)}(k) =(k-1)T_{-(n-1)}^{(3)}(k)+kJ_{-n}^{(3)}(k)$.
\end{theorem}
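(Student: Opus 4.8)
The plan is to argue by induction on $n$, in the same style as the proof of Proposition~\ref{prop1}, but multiplying on the \emph{left} by $\left(\textrm{M}_{k,1}^{(3)}\right)^{-1}$; that choice makes the backward recurrence
$$J_{-m}^{(3)}(k)=\frac{1-k}{k}J_{-(m-1)}^{(3)}(k)+\frac{1-k}{k}J_{-(m-2)}^{(3)}(k)+\frac{1}{k}J_{-(m-3)}^{(3)}(k)$$
shift every index in the right direction. Throughout, $T_{m}^{(3)}(k)=(k-1)J_{m+1}^{(3)}(k)+kJ_{m}^{(3)}(k)$ is read off the formula of Proposition~\ref{prop1} with the index allowed to be any integer.

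First I would dispose of the chain $\textrm{J}_{-n}(k)=\left(\textrm{M}_{k,1}^{(3)}\right)^{-n}=\left[\left(\textrm{M}_{k,1}^{(3)}\right)^{-1}\right]^{n}=\left[\left(\textrm{M}_{k,1}^{(3)}\right)^{n}\right]^{-1}$: by the Corollary $\det\textrm{M}_{k,1}^{(3)}=k>0$, so $\textrm{M}_{k,1}^{(3)}$ is invertible and these equalities are just the laws of exponents for an invertible matrix. I would also note that, since the recurrence of Definition~\ref{d1} can be solved for its term of lowest index, the sequence $\left(J_{n}^{(3)}(k)\right)$ extends uniquely to a two-sided sequence satisfying that recurrence — equivalently the backward recurrence above — for every integer index, and that $T_{m}^{(3)}(k)$, being a fixed linear combination of shifts of $J^{(3)}(k)$, satisfies the same two recurrences. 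Running the recurrence backwards from $J_{0}^{(3)}(k)=0$, $J_{1}^{(3)}(k)=1$, $J_{2}^{(3)}(k)=k-1$ gives $J_{-1}^{(3)}(k)=0$, $J_{-2}^{(3)}(k)=\frac{1}{k}$, $J_{-3}^{(3)}(k)=\frac{1-k}{k^{2}}$ and the matching values of $T^{(3)}(k)$; substituting these into the claimed matrix with $n=1$ recovers precisely the matrix $\left(\textrm{M}_{k,1}^{(3)}\right)^{-1}$ exhibited above, which establishes the base case.

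For the inductive step, assuming the formula for $\textrm{J}_{-n}(k)$, I would use $\textrm{J}_{-(n+1)}(k)=\left(\textrm{M}_{k,1}^{(3)}\right)^{-1}\textrm{J}_{-n}(k)$ and perform the $3\times 3$ multiplication. The first two rows of $\left(\textrm{M}_{k,1}^{(3)}\right)^{-1}$ are $(0,1,0)$ and $(0,0,1)$, so the first two rows of the product are simply the second and third rows of $\textrm{J}_{-n}(k)$ — already in the shape demanded of $\textrm{J}_{-(n+1)}(k)$. The third row of $\left(\textrm{M}_{k,1}^{(3)}\right)^{-1}$ is $\left(\frac{1}{k},\frac{1-k}{k},\frac{1-k}{k}\right)$, and dotting it against each column of $\textrm{J}_{-n}(k)$ gives, up to the factor $k$ carried by the last column, an expression of the form $\frac{1}{k}x_{-(\ell-3)}+\frac{1-k}{k}x_{-(\ell-2)}+\frac{1-k}{k}x_{-(\ell-1)}$ with $x\in\{J^{(3)}(k),T^{(3)}(k)\}$; by the backward recurrence each such expression collapses to $x_{-\ell}$, which is exactly the corresponding entry in the third row of $\textrm{J}_{-(n+1)}(k)$. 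This closes the induction.

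The argument is routine. The one place demanding attention is the bookkeeping of the negative indices in the third row, together with the remark that $T^{(3)}(k)$ obeys the very same three-term recurrence as $J^{(3)}(k)$, so that the single backward recurrence handles the first and third columns (built from $J^{(3)}(k)$) and the middle column (built from $T^{(3)}(k)$) in one stroke. I do not expect any genuine difficulty beyond this indexing.
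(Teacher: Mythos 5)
Your proof is correct and follows essentially the same route as the paper: induction on $n$, peeling off one factor of $\left(\textrm{M}_{k,1}^{(3)}\right)^{-1}$ per step and using the backward recurrence (for both $J^{(3)}(k)$ and $T^{(3)}(k)$, with $T_{m}^{(3)}(k)=(k-1)J_{m+1}^{(3)}(k)+kJ_{m}^{(3)}(k)$, which is also the definition the paper actually uses in its proof) to identify the new entries. The only, harmless, difference is that you multiply by $\left(\textrm{M}_{k,1}^{(3)}\right)^{-1}$ on the left, so two rows shift for free and only one new row needs the recurrence, whereas the paper multiplies on the right and resolves the new column entries via the identity $T_{-(n+2)}^{(3)}(k)=J_{-(n-1)}^{(3)}(k)+(1-k)J_{-n}^{(3)}(k)$; both are valid since powers of a single matrix commute.
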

\begin{proof}
The result holds for $n=1$: 
\begin{align*}
\textrm{J}_{-1}(k)&=\left[
\begin{array}{ccc}
0& 1& 0 \\ 
0& 0 & 1\\ 
\frac{1}{k}& \frac{1-k}{k}& \frac{1-k}{k}
\end{array}
\right]=\left[
\begin{array}{ccc}
J_{0}^{(3)}(k) & (k-1)J_{-1}^{(3)}(k)+kJ_{-2}^{(3)}(k) & kJ_{-1}^{(3)}(k) \\ 
J_{-1}^{(3)}(k) & (k-1)J_{-2}^{(3)}(k)+kJ_{-3}^{(3)}(k)& kJ_{-2}^{(3)}(k)\\ 
J_{-2}^{(3)}(k) & (k-1)J_{-3}^{(3)}(k)+kJ_{-4}^{(3)}(k) & kJ_{-3}^{(3)}(k)
\end{array}
\right].
\end{align*}
By mathematical induction, we assume that $$\textrm{J}_{-n}(k)=\left[
\begin{array}{ccc}
J_{-(n-1)}^{(3)}(k) & T_{-(n+1)}^{(3)}(k) & kJ_{-n}^{(3)}(k) \\ 
J_{-n}^{(3)}(k) & T_{-(n+2)}^{(3)}(k) & kJ_{-(n+1)}^{(3)}(k)\\ 
J_{-(n+1)}^{(3)}(k) & T_{-(n+3)}^{(3)}(k) & kJ_{-(n+2)}^{(3)}(k)
\end{array}
\right],$$ with $T_{-n}^{(3)}(k) =(k-1)J_{-(n-1)}^{(3)}(k)+kJ_{-n}^{(3)}(k)$. Next, consider the following matrix power:
\begin{align*}
\textrm{J}_{-(n+1)}(k)&=\left[
\begin{array}{ccc}
0& 1& 0 \\ 
0& 0 & 1\\ 
\frac{1}{k}& \frac{1-k}{k}& \frac{1-k}{k}
\end{array}
\right]^{n+1}\\
&=\left[
\begin{array}{ccc}
0& 1& 0 \\ 
0& 0 & 1\\ 
\frac{1}{k}& \frac{1-k}{k}& \frac{1-k}{k}
\end{array}
\right]^{n}\left[
\begin{array}{ccc}
0& 1& 0 \\ 
0& 0 & 1\\ 
\frac{1}{k}& \frac{1-k}{k}& \frac{1-k}{k}
\end{array}
\right].
\end{align*}
Then, we obtain
\begin{align*}
\textrm{J}_{-(n+1)}(k)&=\left[
\begin{array}{ccc}
J_{-(n-1)}^{(3)}(k) & T_{-(n+1)}^{(3)}(k) & kJ_{-n}^{(3)}(k) \\ 
J_{-n}^{(3)}(k) & T_{-(n+2)}^{(3)}(k) & kJ_{-(n+1)}^{(3)}(k)\\ 
J_{-(n+1)}^{(3)}(k) & T_{-(n+3)}^{(3)}(k) & kJ_{-(n+2)}^{(3)}(k)
\end{array}
\right]\left[
\begin{array}{ccc}
0& 1& 0 \\ 
0& 0 & 1\\ 
\frac{1}{k}& \frac{1-k}{k}& \frac{1-k}{k}
\end{array}
\right]\\
&=\left[
\begin{array}{ccc}
J_{-n}^{(3)}(k) & J_{-(n-1)}^{(3)}(k)+(1-k)J_{-n}^{(3)}(k)  & T_{-(n+1)}^{(3)}(k)+(1-k)J_{-n}^{(3)}(k) \\ 
J_{-(n+1)}^{(3)}(k) & J_{-n}^{(3)}(k)+(1-k)J_{-(n+1)}^{(3)}(k)  & T_{-(n+2)}^{(3)}(k)+(1-k)J_{-(n+1)}^{(3)}(k) \\ 
J_{-(n+2)}^{(3)}(k) & J_{-(n+1)}^{(3)}(k)+(1-k)J_{-(n+2)}^{(3)}(k)  & T_{-(n+3)}^{(3)}(k)+(1-k)J_{-(n+2)}^{(3)}(k)
\end{array}
\right]\\
&=\left[
\begin{array}{ccc}
J_{-n}^{(3)}(k) & T_{-(n+2)}^{(3)}(k) & kJ_{-(n+1)}^{(3)}(k) \\ 
J_{-(n+1)}^{(3)}(k) & T_{-(n+3)}^{(3)}(k) & kJ_{-(n+2)}^{(3)}(k)\\ 
J_{-(n+2)}^{(3)}(k) & T_{-(n+4)}^{(3)}(k) & kJ_{-(n+3)}^{(3)}(k)
\end{array}
\right],
\end{align*}
and using the following relation $$J_{-n}^{(3)}(k)=(k-1)J_{-(n+1)}^{(3)}(k)+(k-1)J_{-(n+2)}^{(3)}(k)+kJ_{-(n+3)}^{(3)}(k),$$ note that $T_{-(n+2)}^{(3)}(k) =J_{-(n-1)}^{(3)}(k)+(1-k)J_{-n}^{(3)}(k)$. In this way, we will write 
\begin{align*}
\textrm{J}_{-n}(k)&=\left[
\begin{array}{ccc}
0& 1& 0 \\ 
0& 0 & 1\\ 
\frac{1}{k}& \frac{1-k}{k}& \frac{1-k}{k}
\end{array}
\right]^{n}=\left[\left(\textrm{M}_{k,1}^{(3)}\right)^{-1}\right]^{n}.
\end{align*}
\end{proof}

In the following theorem, we will reduce the Binet formula corresponding to the terms of negative indices and not discussed in \cite{Ce6}.

\begin{theorem}
For any integer $n\geq 1$, we obtain 
\begin{equation}
J_{-n}^{(3)}(k) =\frac{1}{k^{2}+k+1}\left[k\left(\frac{1}{k}\right)^{n}+\frac{B\omega_{1}^{n}-A\omega_{2}^{n}}{\omega_{1}-\omega_{2}}\right],
\end{equation}
where $A=\omega_{1}k-1$, $B=\omega_{2}k-1$ and $\omega_{1},\omega_{2}$ are roots of the polynomial $x^{2}+x+1$.
\end{theorem}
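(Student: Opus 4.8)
The plan is to use that $\left(J_{-n}^{(3)}(k)\right)_{n\ge 0}$ obeys a third-order linear recurrence with constant coefficients, namely the backward recurrence
$J_{-n}^{(3)}(k)=\tfrac{1-k}{k}J_{-(n-1)}^{(3)}(k)+\tfrac{1-k}{k}J_{-(n-2)}^{(3)}(k)+\tfrac1k J_{-(n-3)}^{(3)}(k)$
recorded just above the statement. Its characteristic polynomial is $kx^{3}+(k-1)x^{2}+(k-1)x-1$, which factors as $(kx-1)(x^{2}+x+1)$; hence its roots are $1/k$, $\omega_{1}$, $\omega_{2}$ (three distinct values, since $k>0$ gives $k^{2}+k+1\ne 0$). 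Therefore there are constants $\alpha,\beta,\gamma$, depending only on $k$, with $J_{-n}^{(3)}(k)=\alpha k^{-n}+\beta\omega_{1}^{\,n}+\gamma\omega_{2}^{\,n}$ for every $n\ge 0$, and the whole task is to identify these constants.

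First I would pin down the three initial values $J_{0}^{(3)}(k)$, $J_{-1}^{(3)}(k)$, $J_{-2}^{(3)}(k)$. These can be read off as the first column of $\textrm{J}_{-1}(k)=\left(\textrm{M}_{k,1}^{(3)}\right)^{-1}$ and $\textrm{J}_{-2}(k)=\left(\textrm{M}_{k,1}^{(3)}\right)^{-2}$ displayed at the beginning of the section (equivalently, by inverting the defining recurrence once), giving $J_{0}^{(3)}(k)=0$, $J_{-1}^{(3)}(k)=0$, $J_{-2}^{(3)}(k)=\tfrac1k$. Substituting $n=0,1,2$ into $\alpha k^{-n}+\beta\omega_{1}^{\,n}+\gamma\omega_{2}^{\,n}$ then yields a $3\times 3$ Vandermonde-type system. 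Solving it, using only $\omega_{1}+\omega_{2}=-1$ and $\omega_{1}\omega_{2}=1$ (so $\omega_{i}^{2}=-1-\omega_{i}$ and $\omega_{1}^{-1}=\omega_{2}$), produces $\alpha=\dfrac{k}{k^{2}+k+1}$, together with $\beta=\dfrac{\omega_{2}k-1}{(k^{2}+k+1)(\omega_{1}-\omega_{2})}$ and $\gamma=\dfrac{-(\omega_{1}k-1)}{(k^{2}+k+1)(\omega_{1}-\omega_{2})}$. Writing $A=\omega_{1}k-1$ and $B=\omega_{2}k-1$ collapses $\beta\omega_{1}^{\,n}+\gamma\omega_{2}^{\,n}$ into $\dfrac{B\omega_{1}^{\,n}-A\omega_{2}^{\,n}}{\omega_{1}-\omega_{2}}$, and adding the $\alpha k^{-n}$ term gives exactly the claimed identity.

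An equivalent, more mechanical variant avoids solving a system: let $f(n)$ denote the right-hand side, check $f(0)=0$, $f(1)=0$, $f(2)=\tfrac1k$ by direct substitution, verify that $f$ satisfies the backward recurrence above (this is checked component-by-component on the geometric sequences $k^{-n}$, $\omega_{1}^{\,n}$, $\omega_{2}^{\,n}$, each annihilated by the corresponding factor of the characteristic polynomial), and then conclude $f(n)=J_{-n}^{(3)}(k)$ by induction on $n$. I expect the only genuine work, in either route, to be the $\omega_{1},\omega_{2}$ bookkeeping: repeatedly reducing powers and products via $\omega_{1}\omega_{2}=1$ and $\omega_{1}+\omega_{2}=-1$, keeping the antisymmetric roles of $A$ and $B$ straight (the minus sign in $-A\omega_{2}^{\,n}$), and remembering that $\omega_{1}-\omega_{2}$ is purely imaginary so the bracketed quantity is in fact real, as it must be. None of this is deep, but it is where sign and index slips are easy to make.
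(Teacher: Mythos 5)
Your argument is correct, and both of your suggested routes (solving the backward recurrence via its characteristic polynomial, or verifying three initial values plus the recurrence for the closed form) do establish the identity: the characteristic polynomial $kx^{3}+(k-1)x^{2}+(k-1)x-1$ of the backward recurrence indeed factors as $(kx-1)(x^{2}+x+1)$, the initial values $J_{0}^{(3)}(k)=J_{-1}^{(3)}(k)=0$, $J_{-2}^{(3)}(k)=\tfrac1k$ are right, and the stated right-hand side reproduces them (e.g.\ $B-A=-k(\omega_{1}-\omega_{2})$ kills the $n=0$ value). However, this is a genuinely different route from the paper's. The paper simply takes the positive-index Binet formula $J_{n}^{(3)}(k)=\frac{1}{k^{2}+k+1}\bigl[k^{n+1}-\frac{A\omega_{1}^{n}-B\omega_{2}^{n}}{\omega_{1}-\omega_{2}}\bigr]$ as known, substitutes $n\mapsto -n$, and uses $\omega_{1}\omega_{2}=1$ (so $\omega_{1}^{-n}=\omega_{2}^{n}$ and $\omega_{2}^{-n}=\omega_{1}^{n}$) to swap the roles of $A$ and $B$ and flip the sign, which immediately yields the claimed expression. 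That proof is two lines of algebra but leans on two unstated inputs: the positive-index formula itself (imported from earlier work) and the fact that it remains valid for negative indices, which is true because the trailing coefficient $k$ of the recurrence is nonzero but is not justified in the paper. Your derivation is longer but self-contained: it needs only the recurrence and three consecutive values, and as a by-product it independently confirms the constants $A=\omega_{1}k-1$, $B=\omega_{2}k-1$ and the coefficient $k/(k^{2}+k+1)$ rather than inheriting them. If you wanted to match the paper's brevity you could quote the Binet formula and do the $n\mapsto-n$ substitution, but you would then owe the reader the one-sentence remark about why the closed form extends to negative $n$ --- which is exactly the content your argument supplies.
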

\begin{proof}
We know that $\omega_{1}\omega_{2}=1$. In this way, we will make the following substitutions
\begin{align*}
J_{-n}^{(3)}(k)& =\frac{1}{k^{2}+k+1}\left[k^{-n+1}-\frac{A\omega_{1}^{-n}-B\omega_{2}^{-n}}{\omega_{1}-\omega_{2}}\right]\\
&=\frac{1}{k^{2}+k+1}\left[k\cdot k^{-n}-\frac{\frac{A}{\omega_{1}^{n}}-\frac{B}{\omega_{2}^{n}}}{\omega_{1}-\omega_{2}}\right]\\
&=\frac{1}{k^{2}+k+1}\left[k\left(\frac{1}{k}\right)^{n}+\frac{B\omega_{1}^{n}-A\omega_{2}^{n}}{\omega_{1}-\omega_{2}}\right],
\end{align*}
using the Binet formula $J_{n}^{(3)}(k) =\frac{1}{k^{2}+k+1}\left[k^{n+1}-\frac{A\omega_{1}^{n}-B\omega_{2}^{n}}{\omega_{1}-\omega_{2}}\right]$.
\end{proof}

Let us consider the following equation $\textrm{j}_{n}(k)=2\textrm{J}_{n+1}(k)+(1-k)\textrm{J}_{n}(k)+2\textrm{J}_{n-1}(k)$ from Proposition \ref{pr}. We will reduce the corresponding identity to the terms of negative indices.

\begin{theorem}
For any integer $n\geq 1$, we obtain that
$$j_{-n}^{(3)}(k) =2J_{-(n-1)}^{(3)}(k)+(1-k)J_{-n}^{(3)}(k) +2J_{-(n+1)}^{(3)}(k).$$
\end{theorem}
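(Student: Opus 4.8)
The plan is to lift the scalar identity to the level of $3\times 3$ matrices and then read off a single entry. Recall that the proof of Proposition~\ref{pr}, equation~(\ref{eq:2}), rests on the matrix identity
\[
\textrm{N}_{k,0}^{(3)} = 2\,\textrm{M}_{k,1}^{(3)} + (1-k)\,\textrm{I} + 2\left(\textrm{M}_{k,1}^{(3)}\right)^{-1},
\]
which is checked by a direct computation from the explicit form of $\left(\textrm{M}_{k,1}^{(3)}\right)^{-1}$ displayed at the beginning of Section~3. The key observation is that this identity involves only $\textrm{M}_{k,1}^{(3)}$, its inverse, and $\textrm{I}$, so it may be multiplied on the right by $\left(\textrm{M}_{k,1}^{(3)}\right)^{-n}$ for \emph{any} integer $n$, positive or negative; since all powers of $\textrm{M}_{k,1}^{(3)}$ commute with one another (and with $\textrm{N}_{k,0}^{(3)}$, as recorded in the proof of Proposition~\ref{prop2}), this yields
\[
\textrm{j}_{-n}(k) = \textrm{N}_{k,0}^{(3)}\left(\textrm{M}_{k,1}^{(3)}\right)^{-n} = 2\,\textrm{J}_{-(n-1)}(k) + (1-k)\,\textrm{J}_{-n}(k) + 2\,\textrm{J}_{-(n+1)}(k),
\]
i.e. equation~(\ref{eq:2}) with $n$ replaced by $-n$. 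So, although (\ref{eq:2}) is stated only for $n\geq 2$, the matrix recurrence behind it carries no sign restriction on the exponent.

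First I would establish the displayed matrix identity $\textrm{N}_{k,0}^{(3)} = 2\textrm{M}_{k,1}^{(3)} + (1-k)\textrm{I} + 2(\textrm{M}_{k,1}^{(3)})^{-1}$ (this is exactly the computation hidden in the phrase ``similar to the proof of (\ref{eq:1})''). Then I would compare the $(2,1)$ entries of the two sides of the negative-index matrix identity above. By Proposition~\ref{prop2} — whose proof, being a pure matrix-product induction, extends verbatim to negative exponents — the $(2,1)$ entry of $\textrm{j}_{-n}(k) = \textrm{N}_{k,0}^{(3)}\left(\textrm{M}_{k,1}^{(3)}\right)^{-n}$ equals $j_{-n}^{(3)}(k)$. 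By the explicit form of $\textrm{J}_{-m}(k)$ obtained in the theorem just before this one, the $(2,1)$ entries of $\textrm{J}_{-(n-1)}(k)$, $\textrm{J}_{-n}(k)$, $\textrm{J}_{-(n+1)}(k)$ are $J_{-(n-1)}^{(3)}(k)$, $J_{-n}^{(3)}(k)$, $J_{-(n+1)}^{(3)}(k)$, respectively. Equating $(2,1)$ entries then gives exactly
\[
j_{-n}^{(3)}(k) = 2J_{-(n-1)}^{(3)}(k) + (1-k)J_{-n}^{(3)}(k) + 2J_{-(n+1)}^{(3)}(k),
\]
as claimed. Equivalently, one could equate $(1,1)$ entries and re-index, or run the scalar recurrence $J_{-n}^{(3)}(k) = \frac{1-k}{k}J_{-(n-1)}^{(3)}(k)+\frac{1-k}{k}J_{-(n-2)}^{(3)}(k)+\frac{1}{k}J_{-(n-3)}^{(3)}(k)$ against the analogous one for $j_{-n}^{(3)}(k)$; but the matrix route is preferable since it avoids introducing a Binet formula for the Jacobsthal--Lucas terms of negative index, which has not been recorded in the paper.

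The only genuine computation is the verification of $\textrm{N}_{k,0}^{(3)} = 2\textrm{M}_{k,1}^{(3)} + (1-k)\textrm{I} + 2(\textrm{M}_{k,1}^{(3)})^{-1}$; everything afterwards is formal bookkeeping with matrix entries. The one point to be careful about is the boundary value $n=1$ and, more generally, making sure that Proposition~\ref{prop2} and the explicit form of $\textrm{J}_{-n}(k)$ are valid for the shifted indices $-(n-1)$, $-n$, $-(n+1)$ simultaneously — which they are, since both are proved by inductions on matrix products that place no constraint on the sign of the exponent. I expect this index-tracking (keeping $-(n-1)$, $-n$, $-(n+1)$ aligned when passing from the matrices to their $(2,1)$ entries) to be the main, and rather mild, obstacle.
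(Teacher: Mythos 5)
Your proof is correct, but it follows a genuinely different route from the paper's. The paper proves this identity analytically: it takes the Binet formula $J_{-n}^{(3)}(k)=\frac{1}{k^{2}+k+1}\bigl[k\left(\tfrac{1}{k}\right)^{n}+\tfrac{B\omega_{1}^{n}-A\omega_{2}^{n}}{\omega_{1}-\omega_{2}}\bigr]$ from the immediately preceding theorem, expands $(k^{2}+k+1)\bigl[2J_{-(n-1)}^{(3)}(k)+(1-k)J_{-n}^{(3)}(k)+2J_{-(n+1)}^{(3)}(k)\bigr]$, and simplifies using $\omega_{1}\omega_{2}=1$ and $\omega_{1}+\omega_{2}=-1$ to get $(k^{2}+k+2)\left(\tfrac{1}{k}\right)^{n}-(k+1)\tfrac{B\omega_{1}^{n}-A\omega_{2}^{n}}{\omega_{1}-\omega_{2}}$, which it then identifies with $(k^{2}+k+1)\,j_{-n}^{(3)}(k)$ — implicitly invoking a Binet formula for $j^{(3)}$ at negative indices that the paper never actually records. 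Your matrix route (verify $\textrm{N}_{k,0}^{(3)}=2\textrm{M}_{k,1}^{(3)}+(1-k)\textrm{I}+2\bigl(\textrm{M}_{k,1}^{(3)}\bigr)^{-1}$, multiply on the right by $\bigl(\textrm{M}_{k,1}^{(3)}\bigr)^{-n}$, compare $(2,1)$ entries) avoids exactly that unstated ingredient, and in fact it is the mechanism the paper itself uses in the very next theorem, just run in the opposite logical direction: the paper derives the matrix decomposition of $\textrm{j}_{-n}(k)$ \emph{from} this scalar identity, whereas you derive the scalar identity from the matrix decomposition. The one place where you are too quick is the claim that Proposition \ref{prop2} extends ``verbatim'' to negative exponents: it does not — you need a separate downward induction with $\bigl(\textrm{M}_{k,1}^{(3)}\bigr)^{-1}$ as the multiplier, including a fresh base case identifying the entries of $\textrm{N}_{k,0}^{(3)}$ itself with $j_{1}^{(3)}(k)$, $j_{0}^{(3)}(k)$, $j_{-1}^{(3)}(k)=\tfrac{2}{k}$, $j_{-2}^{(3)}(k)=-\tfrac{1}{k^{2}}(k^{2}+k-2)$, etc., computed from the backward recurrence. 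That induction is exactly parallel to the paper's theorem on $\textrm{J}_{-n}(k)$ and causes no circularity, so the gap is easily filled, but it should be stated as its own step rather than waved through.
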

\begin{proof}
We can observe that
\begin{align*}
(k^{2}+k+1) &\cdot \left[2J_{-(n-1)}^{(3)}(k)+(1-k)J_{-n}^{(3)}(k) +2J_{-(n+1)}^{(3)}(k)\right]\\
&=2k\left(\frac{1}{k}\right)^{n-1}+2\frac{B\omega_{1}^{n-1}-A\omega_{2}^{n-1}}{\omega_{1}-\omega_{2}}\\
&\ \ +(1-k)k\left(\frac{1}{k}\right)^{n}+(1-k)\frac{B\omega_{1}^{n}-A\omega_{2}^{n}}{\omega_{1}-\omega_{2}}\\
&\ \ +2k\left(\frac{1}{k}\right)^{n+1}+2\frac{B\omega_{1}^{n+1}-A\omega_{2}^{n+1}}{\omega_{1}-\omega_{2}}\\
&=(k^{2}+k+2)\left(\frac{1}{k}\right)^{n}+\left(\frac{1}{\omega_{1}}+(1-k)+2\omega_{1}\right)\frac{B\omega_{1}^{n}-A\omega_{2}^{n}}{\omega_{1}-\omega_{2}}.
\end{align*}
On the other hand, we can directly verify that $\omega_{2}=\frac{1}{\omega_{1}}$ and $\omega_{1}+\omega_{2}=-1$. Finally, we deduce $\frac{1}{\omega_{1}}+(1-k)+2\omega_{1}=-(k+1)$ and $$2J_{-(n-1)}^{(3)}(k)+(1-k)J_{-n}^{(3)}(k) +2J_{-(n+1)}^{(3)}(k)=j_{-n}^{(3)}(k).$$
\end{proof}

Now, we see the following theorem that allows determining the generating matrices for the family of matrices $\{\textrm{j}_{-n}(k)\}_{n\in \mathbb{N}}$, which we have preliminarily defined by $$\textrm{j}_{-n}(k)=\left[
\begin{array}{ccc}
j_{-(n-1)}^{(3)}(k) & t_{-(n+1)}^{(3)}(k) & kj_{-n}^{(3)}(k) \\ 
j_{-n}^{(3)}(k) & t_{-(n+2)}^{(3)}(k) & kj_{-(n+1)}^{(3)}(k)\\ 
j_{-(n+1)}^{(3)}(k) & t_{-(n+3)}^{(3)}(k) & kj_{-(n+2)}^{(3)}(k)
\end{array}
\right],$$ where $t_{-n}^{(3)}(k) =(k-1)j_{-(n-1)}^{(3)}(k)+kj_{-n}^{(3)}(k)$.

\begin{theorem}
For any integer $n\geq 1$, we obtain that
\begin{equation}\label{b1}
\emph{\textrm{j}}_{-n}(k)=\left(\emph{\textrm{J}}_{1}(k)\right)^{-n}\cdot \emph{\textrm{j}}_{0}(k)=\emph{\textrm{j}}_{0}(k)\cdot \left(\emph{\textrm{J}}_{1}(k)\right)^{-n},
\end{equation}
\begin{equation}\label{b2}
\left(\emph{\textrm{j}}_{n}(k)\right)^{-1}=\left(\textrm{j}_{0}(k)\right)^{-1}\cdot \textrm{j}_{-n}(k)\cdot \left(\textrm{j}_{0}(k)\right)^{-1}.
\end{equation}
\end{theorem}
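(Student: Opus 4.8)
The plan is to prove the matrix identity (\ref{b1}) first and then to read the inverse formula (\ref{b2}) off it in a single line. Since $\textrm{J}_{1}(k)=\textrm{M}_{k,1}^{(3)}$ and $\textrm{j}_{0}(k)=\textrm{N}_{k,0}^{(3)}$, the assertion (\ref{b1}) says exactly that the matrix $\textrm{j}_{-n}(k)$ defined entrywise just above equals $(\textrm{M}_{k,1}^{(3)})^{-n}\textrm{N}_{k,0}^{(3)}$ and that this product is unchanged when its two factors are interchanged. For $k>0$ both $\textrm{M}_{k,1}^{(3)}$ and $\textrm{N}_{k,0}^{(3)}$ are invertible: the first has determinant $k$, and dividing the two determinants of the Corollary shows the second has determinant $(k+1)^{2}(k^{2}+k+2)/k$. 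Hence all the negative powers written below are legitimate.

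The heart of the argument is to recognise $\textrm{j}_{-n}(k)$ as a matrix polynomial in $\textrm{M}_{k,1}^{(3)}$. The theorem immediately preceding gives $j_{-m}^{(3)}(k)=2J_{-(m-1)}^{(3)}(k)+(1-k)J_{-m}^{(3)}(k)+2J_{-(m+1)}^{(3)}(k)$ for every $m\geq 1$; since $t_{-m}^{(3)}(k)=(k-1)j_{-(m-1)}^{(3)}(k)+kj_{-m}^{(3)}(k)$ and $T_{-m}^{(3)}(k)=(k-1)J_{-(m-1)}^{(3)}(k)+kJ_{-m}^{(3)}(k)$ are formed from the $j$'s and the $J$'s by one and the same linear rule, the same relation holds with $(j,t)$ in place of $(J,T)$. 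Comparing the nine entries one at a time, which merely amounts to shifting indices by $\pm 1$, yields the matrix identity
$$\textrm{j}_{-n}(k)=2\,\textrm{J}_{-(n-1)}(k)+(1-k)\,\textrm{J}_{-n}(k)+2\,\textrm{J}_{-(n+1)}(k).$$
Now invoke the previous theorem in the form $\textrm{J}_{-m}(k)=(\textrm{M}_{k,1}^{(3)})^{-m}$ and factor $(\textrm{M}_{k,1}^{(3)})^{-n}$ out on the left:
$$\textrm{j}_{-n}(k)=(\textrm{M}_{k,1}^{(3)})^{-n}\left[2\,\textrm{M}_{k,1}^{(3)}+(1-k)\,\textrm{I}+2\,(\textrm{M}_{k,1}^{(3)})^{-1}\right].$$
A short computation with the explicit matrices $\textrm{M}_{k,1}^{(3)}$ and $(\textrm{M}_{k,1}^{(3)})^{-1}$ --- precisely the computation underlying (\ref{eq:2}) of Proposition \ref{pr} --- shows that the bracket equals $\textrm{N}_{k,0}^{(3)}=\textrm{j}_{0}(k)$, so $\textrm{j}_{-n}(k)=(\textrm{J}_{1}(k))^{-n}\textrm{j}_{0}(k)$. (One could instead prove this last identity by induction on $n$ exactly as in Proposition \ref{prop2}, with $(\textrm{M}_{k,1}^{(3)})^{-1}$ taking the role of $\textrm{M}_{k,1}^{(3)}$ and the backward recurrence for $j_{-n}^{(3)}(k)$ replacing the forward one.)

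It remains to reverse the factors. From the commutativity $\textrm{N}_{k,0}^{(3)}\textrm{M}_{k,1}^{(3)}=\textrm{M}_{k,1}^{(3)}\textrm{N}_{k,0}^{(3)}$ recorded earlier, multiplying on the left and on the right by $(\textrm{M}_{k,1}^{(3)})^{-1}$ gives $\textrm{N}_{k,0}^{(3)}(\textrm{M}_{k,1}^{(3)})^{-1}=(\textrm{M}_{k,1}^{(3)})^{-1}\textrm{N}_{k,0}^{(3)}$, and iterating, $\textrm{N}_{k,0}^{(3)}(\textrm{M}_{k,1}^{(3)})^{-n}=(\textrm{M}_{k,1}^{(3)})^{-n}\textrm{N}_{k,0}^{(3)}$ for every $n\geq 1$; this is the missing equality $\textrm{j}_{0}(k)(\textrm{J}_{1}(k))^{-n}=(\textrm{J}_{1}(k))^{-n}\textrm{j}_{0}(k)$ in (\ref{b1}). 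Finally, for (\ref{b2}) write $\textrm{j}_{n}(k)=\textrm{N}_{k,0}^{(3)}(\textrm{M}_{k,1}^{(3)})^{n}=\textrm{j}_{0}(k)(\textrm{J}_{1}(k))^{n}$, so that $(\textrm{j}_{n}(k))^{-1}=(\textrm{J}_{1}(k))^{-n}(\textrm{j}_{0}(k))^{-1}$; substituting $(\textrm{J}_{1}(k))^{-n}=(\textrm{j}_{0}(k))^{-1}\textrm{j}_{-n}(k)$, which is just (\ref{b1}) rearranged, produces $(\textrm{j}_{n}(k))^{-1}=(\textrm{j}_{0}(k))^{-1}\textrm{j}_{-n}(k)(\textrm{j}_{0}(k))^{-1}$.

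The only step that calls for genuine care is the entrywise lift used to obtain $\textrm{j}_{-n}(k)=2\,\textrm{J}_{-(n-1)}(k)+(1-k)\,\textrm{J}_{-n}(k)+2\,\textrm{J}_{-(n+1)}(k)$: one must check that the scalar relation for $j_{-m}^{(3)}(k)$ propagates correctly to all three columns of the $3\times 3$ array --- in particular to the middle column built from $T_{-\bullet}^{(3)}(k)$ and $t_{-\bullet}^{(3)}(k)$ --- with the index offsets matched up. Everything following that reduction is either the single explicit $3\times 3$ verification noted above or formal manipulation with invertible matrices.
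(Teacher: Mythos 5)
Your argument is correct and follows essentially the same route as the paper: both decompose $\textrm{j}_{-n}(k)$ as $2\,\textrm{J}_{-(n-1)}(k)+(1-k)\,\textrm{J}_{-n}(k)+2\,\textrm{J}_{-(n+1)}(k)$, factor out $\left(\textrm{M}_{k,1}^{(3)}\right)^{-n}$, identify the remaining bracket with $\textrm{N}_{k,0}^{(3)}$, and then obtain (\ref{b2}) by the same conjugation with $\left(\textrm{N}_{k,0}^{(3)}\right)^{-1}$. Your added care about the entrywise lift (including the middle column of $T$'s and $t$'s) and the explicit invertibility check are sound refinements of the same proof.
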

\begin{proof}
(\ref{b1}: From the previous theorem we will consider
\begin{align*}
\textrm{j}_{-n}(k)&=\left[
\begin{array}{ccc}
j_{-(n-1)}^{(3)}(k) & t_{-(n+1)}^{(3)}(k) & kj_{-n}^{(3)}(k) \\ 
j_{-n}^{(3)}(k) & t_{-(n+2)}^{(3)}(k) & kj_{-(n+1)}^{(3)}(k)\\ 
j_{-(n+1)}^{(3)}(k) & t_{-(n+3)}^{(3)}(k) & kj_{-(n+2)}^{(3)}(k)
\end{array}
\right]\\
&=2\left[
\begin{array}{ccc}
J_{-(n-2)}^{(3)}(k) & T_{-n}^{(3)}(k) & kJ_{-(n-1)}^{(3)}(k) \\ 
J_{-(n-1)}^{(3)}(k) & T_{-(n+1)}^{(3)}(k) & kJ_{-n}^{(3)}(k)\\ 
J_{-n}^{(3)}(k) & T_{-(n+2)}^{(3)}(k) & kJ_{-(n+1)}^{(3)}(k)
\end{array}
\right]\\
& \ \ + (1-k)\left[
\begin{array}{ccc}
J_{-(n-1)}^{(3)}(k) & T_{-(n+1)}^{(3)}(k) & kJ_{-n}^{(3)}(k) \\ 
J_{-n}^{(3)}(k) & T_{-(n+2)}^{(3)}(k) & kJ_{-(n+1)}^{(3)}(k)\\ 
J_{-(n+1)}^{(3)}(k) & T_{-(n+3)}^{(3)}(k) & kJ_{-(n+2)}^{(3)}(k)
\end{array}
\right]\\
& \ \ + 2 \left[
\begin{array}{ccc}
J_{-n}^{(3)}(k) & T_{-(n+2)}^{(3)}(k) & kJ_{-(n+1)}^{(3)}(k) \\ 
J_{-(n+1)}^{(3)}(k) & T_{-(n+3)}^{(3)}(k) & kJ_{-(n+2)}^{(3)}(k)\\ 
J_{-(n+2)}^{(3)}(k) & T_{-(n+4)}^{(3)}(k) & kJ_{-(n+3)}^{(3)}(k)
\end{array}
\right]\\
&=2\textrm{J}_{-(n-1)}(k)+(1-k)\textrm{J}_{-n}(k)+2\textrm{J}_{-(n+1)}(k).
\end{align*}
On the other hand, we know that $\textrm{J}_{-n}(k)=\left(\textrm{M}_{k,1}^{(3)}\right)^{-n}$. In this way, we will make the corresponding substitutions to determine that
\begin{align*}
\textrm{j}_{-n}(k)&=\left(\textrm{M}_{k,1}^{(3)}\right)^{-n}\cdot \left[2\textrm{M}_{k,1}^{(3)}+(1-k)\textrm{I}+2\left(\textrm{M}_{k,1}^{(3)}\right)^{-1}\right]\\
& =\left(\textrm{M}_{k,1}^{(3)}\right)^{-n}\cdot \left[
\begin{array}{ccc}
k-1& 2k& 2k \\ 
2&1-k& 2\\ 
\frac{2}{k} & \frac{2}{k} &-\frac{1}{k}(k^{2}+k-2)
\end{array}
\right]\\
&=\left(\textrm{M}_{k,1}^{(3)}\right)^{-n}\cdot \textrm{N}_{k,0}^{(3)}\\
&=\textrm{N}_{k,0}^{(3)} \cdot \left(\textrm{M}_{k,1}^{(3)}\right)^{-n}=\textrm{j}_{0}(k)\cdot \textrm{J}_{-n}(k).
\end{align*}

(\ref{b2}): From Eq. (\ref{b1}) and $\textrm{j}_{n}(k)=\textrm{N}_{k,0}^{(3)} \cdot \left(\textrm{M}_{k,1}^{(3)}\right)^{n}$, we know that 
\begin{align*}
\left(\textrm{j}_{n}(k)\right)^{-1}&=\left(\textrm{M}_{k,1}^{(3)}\right)^{-n}\cdot \left(\textrm{N}_{k,0}^{(3)}\right)^{-1}\\
&=\left(\textrm{N}_{k,0}^{(3)}\right)^{-1}\cdot \textrm{N}_{k,0}^{(3)}\left(\textrm{M}_{k,1}^{(3)}\right)^{-n}\cdot \left(\textrm{N}_{k,0}^{(3)}\right)^{-1}\\
&=\left(\textrm{j}_{0}(k)\right)^{-1}\cdot \textrm{j}_{-n}(k)\cdot \left(\textrm{j}_{0}(k)\right)^{-1}.
\end{align*}
The proof is completed.
\end{proof}

\medskip


\begin{thebibliography}{99}
\bibitem{Ce1} 
Cerda-Morales, G. Identities for Third Order Jacobsthal Quaternions, Advances in Applied Clifford Algebras 27(2) (2017), 1043--1053.
\bibitem{Ce2} 
Cerda-Morales, G. On a Generalization of Tribonacci Quaternions, Mediterranean Journal of Mathematics 14:239 (2017), 1--12.
\bibitem{Ce3} 
Cerda-Morales, G. A note on modified third-order Jacobsthal numbers, Proyecciones Journal of Mathematics 39(2) (2020), 409--420.
\bibitem{Ce4}
Cerda-Morales, G. Some results on dual third-order Jacobsthal quaternions, FILOMAT 33(7) (2019), 1865--1876.
\bibitem{Ce5}
Cerda-Morales, G. Third-order Jacobsthal Generalized Quaternions, J. Geom. Symmetry Phys. 50 (2018), 11--27.
\bibitem{Ce6} 
Cerda-Morales, G. On the third-order Jacobsthal and third-order Jacobsthal-Lucas sequences and their matrix representations, Mediterr. J. Math., 16(32), 2019, 1--12.
\bibitem{Ci1} 
Civic, H., Turkmen, R. On the $(s, t)$-Fibonacci Matrix Sequences, Ars Combinatoria 87 (2008), 161--173.
\bibitem{Ci2} 
Civic, H., Turkmen, R. Notes on the $(s, t)$-Lucas and Lucas Matrix Sequences, Ars Combinatoria 89 (2008), 271--285.
\bibitem{Co}
Cook, C. K., Bacon, M. R. Some identities for Jacobsthal and Jacobsthal-Lucas numbers satisfying higher order recurrence relations, Annales Mathematicae et Informaticae 41 (2013), 27--39.
\bibitem{Ip}
Ipek, A., Ari, K., Turmen, R. The Generalized $(s, t)$-Fibonacci and Matrix Sequences, Transylvanian Journal of Mathematics and Mechanics 7(2) (2015), 137--148.
\bibitem{Re}
Vieria, F. R., Catarino, P. Generalized Fibonacci and $k$-Pell matrix sequences: Another way of demonstrating their properties, Notes on Number Theory and Discrete Mathematics 25(4), (2019), 110--122.
\bibitem{Wa}
Wani, A. A., Badshah, V., Rathore, G. P., Catarino, P. M. Generalized Fibonacci and $k$-Pell matrix sequence, Journal of Mathematics, 51 (1) (2019), 17--28.

\end{thebibliography}
\end{document}